\newcommand{\leqnomode}{\tagsleft@true\let\veqno\@@leqno}
\newcommand{\reqnomode}{\tagsleft@false\let\veqno\@@eqno}
\begin{document}

\title{On Risk-Averse Stochastic Semidefinite Programs with Continuous Recourse
\thanks{The authors gratefully acknowledge the support of the German Research Foundation (DFG) within the collaborative research center TRR 154 ``Mathematical Modeling, Simulation and Optimization Using the Example of Gas Networks''.}
}


\titlerunning{Risk-Averse Stochastic Semidefinite Programs}

\author{Matthias Claus \and R\"udiger Schultz \and Kai~Sp\"urkel \and Tobias Wollenberg}

\institute{M. Claus \and R. Schultz \and K. Sp\"urkel \and T. Wollenberg \at University Duisburg-Essen \\ Thea-Leymann-Straße 9 \\ D-45127 Essen \\ Tel.: +49 201 183 6887 \\ \email{matthias.claus@uni-due.de}}

\date{Received: date / Accepted: date}

\maketitle

\begin{abstract}
The vast majority of the literature on stochastic semidefinite programs (stochastic SDPs) with recourse is concerned with risk-neutral models. In this paper, we introduce mean-risk models for stochastic SDPs and study structural properties as convexity and (Lipschitz) continuity. Special emphasis is placed on stability with respect to changes of the underlying probability distribution. Perturbations of the true distribution may arise from incomplete information or working with (finite discrete) approximations for the sake of computational efficiency. We discuss extended formulations for stochastic SDPs under finite discrete distributions, which turn out to be deterministic (mixed-integer) SDPs that are (almost) block-structured for many popular risk measures.

\keywords{Stochastic Semidefinite Programming \and Mean-Risk Models \and Stability Analysis \and Extended Formulations}
\end{abstract}

\section{Introduction}

Stochastic semidefinite programs with recourse were first considered by Ariyawansa and Zhu in \cite{AriyawansaZhu2006}, where, for finite discrete distributions, the authors reformulate the risk-neutral stochastic SDP as a block-structured deterministic SDP and discuss an application to the stochastic version of the minimum-volume covering ellipsoid problem (cf. \cite{SunFreund2004}, \cite{VandenbergheBoyd1996}). In \cite{ZhuAriyawansa2011}, the same authors give a multitude of other applications, including problems in geometry, location aided routing, RC circuit design and structural optimization.

\smallskip

Some approaches to the algorithmic treatment of  risk neutral programs with linear recourse carry over to expectation based stochastic SDPs. Extending the results of Zhao (cf. \cite{Zhao2001}), Mehrotra and \"Ozevin derive a polynomial logarithmic barrier algorithm employing Bender's decomposition (cf. \cite{MehrotraOezevin2007}). Using the volumetric barrier of Vaidya (cf. \cite{Vaidya1996}), Ariyawansa and Zhu construct algorithms of similar complexity in \cite{AriyawansaZhu2011}. Furthermore, in \cite{JinAriyawansaZhu2012}, Jin, Ariyawansa and Zhu propose homogeneous self-dual algorithms with complexities comparable to the ones of the methods mentioned before. Motivated by an application in multi-antenna wireless networks, Gaujal and Mertikopoulos establish a stochastic approximation algorithm in \cite{GaujalMertikopoulos2016}.

\smallskip

Chance constrained SDP models have been introduced by Ariyawansa and Zhu in \cite[Chapter 3]{Zhu2006}, where an application to the stochastic minimum-volume covering ellipsoid problem is considered. A different approach towards risk-aversion is taken by Schultz and Wollenberg, who consider stochastic mixed-integer semidefinite programs arising from unit commitment problems in AC transmission systems. Based on Lagrangian relaxation of the nonanticipativity constraint, a decomposition algorithm for minimizing a weighted sum of the expectation and the probability of exceeding a certain threshold is proposed in \cite{SchultzWollenberg2017}.

\smallskip

The present work extends the models of \cite{SchultzWollenberg2017} and \cite{AriyawansaZhu2011} by considering more general risk measures. Instead of focussing on a certain application, we discuss structural properties as convexity and (Lipschitz) continuity of the resulting objective functions. Consequences for quantitative stability of the stochastic SDP models under perturbations of the underlying distribution are pointed out. Such perturbations may arise from incomplete information about the distribution or the choice to work with a simpler (possibly finite discrete) approximation for reasons of computational efficiency.

\smallskip

Furthermore, we establish sufficient conditions for differentiabiliy in the risk neutral setting.
Finally, for finite discrete distributions, we establish equivalent SDPs for various risk measures and give indications on how to exploit their special structure for numerical treatment.

\section{Two-Stage Stochastic SDPs with Continuous Recourse}

Let $\mathcal{S}^k_+$ denote the cone of symmetric positive semidefinite matrices in $\mathbb{R}^{k \times k}$. The componentwise Frobenius product of $A = (a_1, \ldots a_s)^\top \in (\mathcal{S}^{k}_+)^l$ and $x \in \mathcal{S}^k_+$ is defined as $A \bullet x := \big( \mathrm{tr}(a_1 x), \ldots, \mathrm{tr}(a_s x) \big)^\top \in \mathbb{R}^s$. Furthermore, the Frobenius norm on $\mathcal{S}^k_+$ is given by $\|x\| := \sqrt{x \bullet x}$.

\smallskip

We shall consider the parametric SDP
\begin{equation}
\leqnomode
\label{ParametricSDP}
\tag{\textbf{P}($z$)}
\min_{x,y} \lbrace c \bullet x + q \bullet y \; | \; T \bullet x + W \bullet y = z, \; x \in X, \; y \in \mathcal{S}^m_+ \rbrace,
\end{equation}
where $z \in \mathbb{R}^s$ enters as a parameter. The data is comprised of $c \in \mathcal{S}^n_+$, $q \in \mathcal{S}^m_+$, $T \in (\mathcal{S}^n_+)^s$, $W \in (\mathcal{S}^m_+)^s$ and a nonempty, closed, convex set $X \subseteq \mathcal{S}^n_+$. The set $X$ is usually given as a spectrahedron, i.e. the intersection of the solution sets of a finite number of affine matrix inequalities with the cone of positive semidefinite matrices.

\smallskip

Let $z = Z(\omega)$ be the realization of a random vector $Z: \Omega \to \mathbb{R}^s$ on some probability space $(\Omega, \mathcal{F}, \mathbb{P})$. A two-stage stochastic SDP arises from \eqref{ParametricSDP} if the decision $x$ has to be taken without knowledge of the particular realization $Z(\omega)$, while $y$ can be chosen after observing the previously unknown parameter. In this setting, the optimal decision $y$ is governed by the recourse problem
\begin{equation}
\label{RecourseProblem}
\min_y \lbrace q \bullet y \; | \; W \bullet y = Z(\omega) - T \bullet x,\; y \in \mathcal{S}^m_+ \rbrace.
\end{equation}
Let $\varphi: \mathbb{R}^s \to \overline{\mathbb{R}}$  denote the optimal value function of \eqref{RecourseProblem} with respect to the right-hand side of the system of matrix equations in its constraints, i.e.
$$
\varphi(t) := \min_y \lbrace q \bullet y \; | \; W \bullet y = t,\; y \in \mathcal{S}^m_+ \rbrace.
$$

Introducing the function $f: \mathcal{S}^n_+ \times \mathbb{R}^s \to \overline{\mathbb{R}}$, $f(x,z) := c \bullet x + \varphi(z - T \bullet x)$ we may rewrite (\textbf{P}($Z(\cdot)$) as
\begin{equation}
\label{OptimazationOverRandomVariables}
\min_x \lbrace f(x,Z(\cdot)) \; | \; x \in X \rbrace.
\end{equation}
Due to the assumed interplay between decision and observation, problem \eqref{OptimazationOverRandomVariables} is not well-defined without further modelling choices. For any $x$, $f(x,Z(\cdot))$ belongs to the space $L^0(\Omega, \mathcal{F}, \mathbb{P})$ of extended real-valued random variables on the underlying probability space. We thus may fix any functional $\mathcal{R}: \mathcal{X} \to \overline{\mathbb{R}}$ satisfying
$$
\lbrace f(x, Z(\cdot)) \; | \; x \in X \rbrace \subseteq \mathcal{X} \subseteq L^0(\Omega, \mathcal{F}, \mathbb{P})
$$
and consider the optimization problem
\begin{equation}
\label{OptimizationWithRiskMeasures}
\min_x \lbrace Q_\mathcal{R}(x) \; | \; x \in X \rbrace,
\end{equation}
where the mapping $Q_\mathcal{R}: \mathcal{S}^n_+ \to \overline{\mathbb{R}}$ is given by $Q_\mathcal{R}(x) = \mathcal{R}[f(x,Z(\cdot))]$.

\smallskip

We shall work with the following assumptions:
\begin{itemize}
\item[A1] (\textbf{Complete recourse}) $W \bullet \mathcal{S}^m_+ = \mathbb{R}^s$.
\ \\ \vspace{-5pt}
\item[A2] (\textbf{Strict dual feasibility}) There is some $u \in \mathbb{R}^s$ such that $q - W^\top u$ is positive definite.
\end{itemize}

Similar, yet more restrictive assumptions are also made in \cite{MehrotraOezevin2007}.

\begin{lemma}
\label{LemmaMDKompakt}
Assume A2, then A1 holds if and only if $M_D := \lbrace u \in \mathbb{R}^s \; | \; q - W^\top u \in \mathcal{S}^m_+ \rbrace$ is compact.
\end{lemma}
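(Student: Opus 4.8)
The plan is to recognize $M_D$ as the feasible set of the conic dual to the recourse problem defining $\varphi$ and to tie its compactness to surjectivity of the map $y \mapsto W \bullet y$ on $\mathcal{S}^m_+$. Writing $\mathcal{W}(y) := W \bullet y$, its adjoint is $W^\top u = \sum_{i=1}^s u_i w_i$, so the Lagrangian $\mathrm{tr}\big((q - W^\top u)\,y\big) + u^\top t$ gives the SDP dual of $\min\{q \bullet y : W \bullet y = t,\ y \in \mathcal{S}^m_+\}$ as $\max\{u^\top t : q - W^\top u \succeq 0\} = \max\{u^\top t : u \in M_D\}$. As the preimage of the closed convex cone $\mathcal{S}^m_+$ under the affine map $u \mapsto q - W^\top u$, the set $M_D$ is closed and convex; by A2 it is nonempty (indeed it contains a point with $q - W^\top u \succ 0$, so it has nonempty interior). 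Hence $M_D$ is compact iff it is bounded, which for a nonempty closed convex set holds iff its recession cone is trivial.

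First I would compute that recession cone. A direction $d$ is a recession direction iff $(q - W^\top u) - \lambda W^\top d \succeq 0$ for all $\lambda \ge 0$, i.e.\ iff $W^\top d \preceq 0$. Next I would identify this with a polar cone: for $K := W \bullet \mathcal{S}^m_+$ one has $\langle d, W \bullet y\rangle = \mathrm{tr}\big((W^\top d)\,y\big)$, so $W^\top d \preceq 0$ is equivalent to $\langle d, z\rangle \le 0$ for every $z \in K$. Thus the recession cone of $M_D$ equals the polar cone $K^\circ$, and $M_D$ is compact iff $K^\circ = \{0\}$. The implication A1 $\Rightarrow$ compactness is then immediate: if $K = \mathbb{R}^s$ then $K^\circ = \{0\}$, so $M_D$ has trivial recession cone and, being closed, is compact.

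For the converse I would argue directly for each right-hand side rather than through the bipolar theorem. Fix $t \in \mathbb{R}^s$. Since $M_D$ is compact and nonempty, the dual value $\sup\{u^\top t : u \in M_D\}$ is finite; because A2 supplies a strictly feasible dual point, SDP strong duality applies and yields that the primal recourse problem is solvable, in particular feasible. Feasibility means $t = W \bullet y$ for some $y \in \mathcal{S}^m_+$, i.e.\ $t \in K$; as $t$ was arbitrary, $K = \mathbb{R}^s$, which is A1.

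The hard part is precisely this converse. The recession/polar computation only shows that compactness of $M_D$ forces $K^\circ = \{0\}$, and the bipolar theorem then gives merely $\overline{K} = \mathbb{R}^s$; since the linear image of the positive semidefinite cone need not be closed, this does not by itself yield $K = \mathbb{R}^s$. Assumption A2 (a Slater condition for the dual) is exactly what closes this gap, upgrading the closure statement to genuine primal feasibility via strong duality. I would therefore be careful to invoke the form of the SDP strong duality theorem in which strict feasibility of the dual guarantees \emph{solvability} of the primal, not merely equality of optimal values.
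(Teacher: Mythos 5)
Your proof is correct and takes essentially the same approach as the paper's: your recession-cone/polar-cone computation for A1 $\Rightarrow$ compactness is the structural version of the paper's normalized-sequence argument (the paper extracts a nonzero recession direction $v$ with $-W^\top v \in \mathcal{S}^m_+$ and excludes it via weak duality, which is exactly your $K^\circ = \{0\}$ step), and your converse --- strict dual feasibility from A2, finiteness of the dual value over the nonempty compact $M_D$, hence primal solvability by strong duality --- is precisely the paper's argument. One correction to your closing commentary, though: the assertion that $\overline{K} = \mathbb{R}^s$ ``does not by itself yield $K = \mathbb{R}^s$'' is false, because $K$ is convex, and a convex set whose closure is all of $\mathbb{R}^s$ must itself be all of $\mathbb{R}^s$ (for convex sets, $\mathrm{ri}(\overline{K}) = \mathrm{ri}(K)$, cf.\ Rockafellar, Theorem 6.3); the non-closedness of linear images of $\mathcal{S}^m_+$ can only manifest when the closure is a proper subset. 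Consequently, the converse would also follow from your polar-cone identity and the bipolar theorem alone, with A2 needed only to guarantee $M_D \neq \emptyset$; this does not affect the validity of the strong-duality route you actually take, which is sound and is the one the paper uses, but the ``hard part'' is not where you locate it.
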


\begin{proof}
$M_D$ is closed due to the closedness of $S^m_+$. Suppose that $M_D$ is unbounded, i.e. that there exists a sequence $\lbrace u_k \rbrace_{k \in \mathbb{N}} \subseteq M_D$ with $\lim_{k \to \infty} \|u_k\| = \infty$. Define $v_k:=u_k /\|u_k\|$, then ${\|v_k\|} = 1$ holds for all $k \in \mathbb{N}$. Therefore, the sequence $\lbrace v_k \rbrace_{k \in \mathbb{N}}$ can be assumed to converge to some $v \neq 0$ without loss of generality. By $u_k \in M_D$ we have $q -W^\top u_k \in \mathcal{S}^m_+$ for all $k \in \mathbb{N}$. Thus,
$$
-W^\top v = \lim_{k \to \infty} - W^\top v_k = \lim_{k \to \infty} \frac{1}{\|u_k\|} \big( q - W^\top u_k \big) \in \mathcal{S}^m_+.
$$
Now select any $u_0 \in M_D$. Then $u_0 + \alpha v \in M_D$ holds for any $\alpha \geq 0$ and we have
$$
\lim_{\alpha \to \infty} v^\top (u_0 + \alpha v) = \lim_{\alpha \to \infty} v^\top u_0 + \alpha \| v \|^2 = \infty,
$$
verifying $\sup \lbrace v^\top u \; | \; q - W^\top u \in \mathcal{S}^m_+ \rbrace = \infty$. By duality, the set $\lbrace y \in \mathcal{S}^m_+ \; | \; W \bullet y = v \rbrace$ has to be empty, which contradicts A1.

\smallskip

Let $M_D$ be compact, then once again by duality for arbitrary $t \in \mathbb{R}^s$, there exists $u \in M_D$ with $\min \lbrace q \bullet y \; | \; W \bullet y = t, \; y \in \mathcal{S}^m_+ \rbrace = t^\top u$, which implies $t \in W \bullet \mathcal{S}^m_+$ and thus A1. \qed
\end{proof}

The lemma above shows that $\sup \lbrace t^\top u \; | \; q - W^\top u \in \mathcal{S}^m_+ \rbrace$ is attained for any $t \in \mathbb{R}^{s}$ whenever A1 and A2 hold true.

\begin{lemma}
\label{LemmaRecourseFunctional}
Assume A1 and A2, then $\varphi$ is finite, convex and Lipschitz continuous on $\mathbb{R}^s$.
\end{lemma}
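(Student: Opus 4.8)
The plan is to represent $\varphi$ as the support function of the compact set $M_D$ from Lemma~\ref{LemmaMDKompakt} and to read off all three properties from this representation. The starting point is the conic duality between the defining recourse problem and the dual problem $\max \lbrace t^\top u \mid q - W^\top u \in \mathcal{S}^m_+ \rbrace$: forming the Lagrangian $\langle q - W^\top u, y \rangle + t^\top u$ and minimizing over $y \in \mathcal{S}^m_+$ collapses it to $t^\top u$ exactly on $M_D$ and to $-\infty$ elsewhere, which produces this dual.

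First I would establish finiteness together with the exact identity $\varphi(t) = \max_{u \in M_D} t^\top u$. The upper bound $\varphi(t) < +\infty$ follows from A1, which guarantees a primal-feasible $y$ for every $t$. For the matching lower bound and, more importantly, for the absence of a duality gap with attainment, I would invoke A2 as a Slater-type constraint qualification: strict dual feasibility is precisely strict feasibility of the dual of the recourse problem, so strong duality holds and the dual optimum is attained. In fact the second half of the proof of Lemma~\ref{LemmaMDKompakt} already produces, for every $t$, some $u \in M_D$ with $\varphi(t) = t^\top u$, which together with weak duality forces this $u$ to be a maximizer; I would simply cite this to obtain $\varphi(t) = \max_{u \in M_D} t^\top u$. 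Since Lemma~\ref{LemmaMDKompakt} shows that $M_D$ is compact under A1 and A2, and A2 makes $M_D$ nonempty, this maximum is finite for every $t$.

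With the representation $\varphi(t) = \max_{u \in M_D} t^\top u$ in hand, convexity is immediate, as $\varphi$ is a pointwise supremum of the linear functions $t \mapsto t^\top u$. For Lipschitz continuity I would use the elementary estimate $|\max_u a(u) - \max_u b(u)| \leq \max_u |a(u) - b(u)|$ with $a(u) = t_1^\top u$ and $b(u) = t_2^\top u$, which yields
$$
|\varphi(t_1) - \varphi(t_2)| \leq \max_{u \in M_D} |(t_1 - t_2)^\top u| \leq \Big( \max_{u \in M_D} \|u\| \Big)\, \|t_1 - t_2\|.
$$
The constant $L := \max_{u \in M_D} \|u\|$ is finite because $M_D$ is compact, so $\varphi$ is globally Lipschitz with modulus $L$.

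The one genuinely delicate point is the no-gap identity $\varphi(t) = \max_{u \in M_D} t^\top u$: strong duality for semidefinite programs can fail in the absence of a constraint qualification, so it is essential that A2 supplies strict feasibility of the dual. Everything else — convexity and the Lipschitz estimate — is then a routine consequence of $\varphi$ being the support function of the compact set $M_D$.
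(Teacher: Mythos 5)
Your proof is correct and takes essentially the same route as the paper: both establish the representation $\varphi(t) = \max_{u \in M_D} t^\top u$ via strong duality under A1 and A2, deduce finiteness from the nonemptiness and compactness of $M_D$ (Lemma~\ref{LemmaMDKompakt}), and derive convexity and Lipschitz continuity with the same constant $\max_{u \in M_D} \|u\|$ from that support-function structure. Your streamlined Lipschitz estimate $|\max_u a(u) - \max_u b(u)| \leq \max_u |a(u)-b(u)|$ is just a compact form of the paper's sandwich argument with the two maximizers $u_1, u_2$, so the two proofs coincide in substance.
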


\begin{proof}
Due to A1 and A2, strong duality holds true for the SDP defining $\varphi$. We thus have
$$
\varphi(t)= \max_u \lbrace t^\top u \; | \; u \in M_D \rbrace \; \; \forall t \in \mathbb{R}^s.
$$
As $M_D$ is nonempty and compact by Lemma \ref{LemmaMDKompakt}, $\varphi$ is finite on $\mathbb{R}^s$.

\smallskip

Furthermore, for arbitrary $\lambda \in [0,1]$ and $t_1, t_2 \in \mathbb{R}^s$, strong duality implies
\begin{align*}
\varphi(\lambda t_1+ (1-\lambda)t_2) &=\max_{u \in M_D} (\lambda t_1+ (1-\lambda)t_2)^Tu \\
& \leq \lambda \max_{u \in M_D} t_1^Tu + (1-\lambda) \max_{u \in M_D} t_2^Tu \\
&= \lambda \varphi(t_1)+ (1-\lambda) \varphi(t_2),
\end{align*}
which proves the asserted convexity of $\varphi$.

\smallskip

To establish Lipschitz continuity, let $t_1, t_2 \in \mathbb{R}^s$ be arbitrary and fixed. Then by strong duality and the compactness of $M_D$, there exists $u_1,u_2 \in M_D$ such that $\varphi(t_1) = t_1^\top u_1$ and $\varphi(t_2) = t_2^\top u_2$. By $t_1^\top u_1 \geq t_1^\top u_2$ and $t_2^\top u_2 \geq t_2^\top u_1$ we have
$$
- \|u_2\| \cdot \|t_1-t_2\| \leq t_1^ \top u_2 - t_2^\top u_2 \leq \varphi(t_1) - \varphi(t_2) \leq t_1^ \top u_1 - t_2^\top u_1 \leq \| u_1 \| \cdot \|t_1-t_2 \|
$$
and thus $|\varphi(t_1)-\varphi(t_2)| \leq \max\{\|u_1\|,\|u_2\| \} \|t_1-t_2\|$. Set $L_\varphi := \max\nolimits_{u\in M_D}\|u\| < \infty$, then
$$
|\varphi(t_1)-\varphi(t_2)| \leq L_\varphi \cdot \|t_1-t_2\|
$$
holds for all $t_1, t_2 \in \mathbb{R}^s$, which completes the proof. \qed
\end{proof}

\begin{remark}
Under assumptions A1 and A2, $\varphi$ is finite and convex, which implies directional differentiability by \cite[Theorem 25.4]{Rockafellar1970}. Furthermore, the subdifferential of $\varphi$ is convex, compact and admits the representation
$$
\partial \varphi(t) = \mathrm{Argmax} \lbrace u^\top t \; | \; u \in M_D \rbrace.
$$
By \cite[Theorem 25.1]{Rockafellar1970}, $\varphi$ is differentiable at $t$ if and only if $\partial \varphi(t)$ is a singleton. In that case, we have $\partial \varphi(t) = \lbrace \nabla \varphi(t) \rbrace$.
\end{remark}

\begin{remark}
In two-stage stochastic linear programming, the counterpart of $\varphi$ is the optimal value function of a linear program:
$$
\varphi_l: \mathbb{R}^s \to \overline{\mathbb{R}}, \; \; \varphi_l(t) := \min \lbrace q_l^\top y_l \; | \; W_l y_l = t, \; y_l \in \mathbb{R}^m_+ \rbrace
$$
with $q_l \in \mathbb{R}^m$ and $W_l \in \mathbb{R}^{s \times m}$. By linear programming theory, $\varphi_l$ is finite on $\mathbb{R}^s$ iff $W_l (\mathbb{R}^m_+) =\mathbb{R}^s$ and $M_{D_l} = \lbrace u \in \mathbb{R}^s \; | \; W_l^\top u \leq q\} \neq \emptyset$. In this situation, $\varphi_l$ admits the representation
$$
\varphi_l(t) = \max_{j=1,...,N} d_j^\top t,
$$
where $d_1,...,d_N$ denote the vertices of the polytope $M_{D_l}$. In particular, $\varphi_l$ is piecewise linear, convex and Lipschitz continuous.
\end{remark}

The following example shows that the assumptions A1 and $M_D \neq \emptyset$ are not sufficient to ensure that the optimal value in the problem defining $\varphi(t)$ is attained for all $t \in \mathbb{R}^s$.

\begin{example}
For $t \in \mathbb{R}$, consider the SDP
\begin{equation}
\label{Ex1Primal}
\min \left\{ \begin{bmatrix} 1 & 0 & 0 & 0 \end{bmatrix} \bullet y \; | \; \begin{bmatrix} 0 & \frac{1}{2} & \frac{1}{2} & 0 \end{bmatrix} \bullet y = t, \; y \in \mathcal{S}^2_+ \right\}.
\end{equation}
For any $t \in \mathbb{R}$ we have 
$$
\begin{bmatrix} |t| +1 & t \\ t & |t| +1 \end{bmatrix} \in \mathrm{int} \; \mathcal{S}^2_+ \; \; \text{and} \; \; \begin{bmatrix} 0 & \frac{1}{2} & \frac{1}{2} & 0 \end{bmatrix} \bullet \begin{bmatrix} |t| +1 & t \\ t & |t| +1 \end{bmatrix} = t.
$$
Consequently, A1 is fulfilled. Moreover, we have
\begin{equation}
\label{Ex1MD}
M_D = \left\{ u \in \mathbb{R} \; | \; \begin{bmatrix} 1 & 0 & 0 & 0 \end{bmatrix} - \begin{bmatrix} 0 & \frac{1}{2} & \frac{1}{2} & 0 \end{bmatrix} \cdot u \in \mathcal{S}^2_+ \right\} = \lbrace 0 \rbrace.
\end{equation}
As \eqref{Ex1Primal} is strictly feasible for any right-hand side $t \in \mathbb{R}^s$, strong duality holds and \eqref{Ex1MD} implies that the infimum of \eqref{Ex1Primal} is zero. Furthermore, for any $t \in \mathbb{R} \setminus \lbrace 0 \rbrace$ we have
$$
\begin{bmatrix} y_{11} & \frac{t}{2} \\ \frac{t}{2}  & y_{22} \end{bmatrix} \in \mathcal{S}^2_+ \; \; \Leftrightarrow \; \; y_{11} > 0, \; y_{22} > 0, \; y_{11}y_{22}-\left( \frac{t}{2} \right)^2 \geq 0,
$$
which yields the lower bound $y_{11} \geq t^2/(4y_{22}) > 0$ for any $y$ that is feasible for \eqref{Ex1Primal}. Consequently, the optimal value in \eqref{Ex1Primal} is not attained if $t \neq 0$. 
\end{example}

\section{Structure of Risk-Averse Stochastic SDPs}

Let us now return to problem \eqref{OptimizationWithRiskMeasures} and consider various choices of $\mathcal{R}$. To ensure finiteness, we shall work with moment conditions on the Borel probability measure $\mathbb{P} \circ Z^{-1}$  induced by the underlying random vector $Z(\cdot)$. Let $\mathcal{P}(\mathbb{R}^s)$ denote the space of all Borel probability measures on $\mathbb{R}^s$ and
$$
\mathcal{M}^p_s := \lbrace \mu \in \mathcal{P}(\mathbb{R}^s) \; | \; \int_{\mathbb{R}^s} \|t\|^p~\mu(dt) < \infty \rbrace
$$
be the subspace of measures having finite moments of order $p \geq 1$.

\begin{lemma}
\label{LemmaF}
Assume A1, A2 and $\mathbb{P} \circ Z^{-1} \in \mathcal{M}^1_s$. Then $f(x,Z(\cdot)) \in L^1(\Omega, \mathcal{F}, \mathbb{P})$ for all $x \in \mathcal{S}^n_+$ and the mapping $F: \mathcal{S}^m_+ \to L^1(\Omega, \mathcal{F}, \mathbb{P})$, $F(x) := f(x,Z(\cdot))$ is convex and Lipschitz continuous with constant $\|c\| + L_\varphi \cdot \|T\|$.
\end{lemma}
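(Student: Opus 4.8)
The plan is to verify the three assertions --- integrability, convexity, and Lipschitz continuity --- in turn, with Lemma~\ref{LemmaRecourseFunctional} serving as the main tool. Throughout I would lean on the facts established there: $\varphi$ is finite, convex, and Lipschitz continuous with constant $L_\varphi$. I would also record at the outset that $\varphi(0) = 0$, which follows from the dual representation $\varphi(t) = \max_{u \in M_D} t^\top u$ (evaluated at $t = 0$), or directly since $y = 0$ solves the recourse problem for right-hand side $0$.

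First, for integrability, I would fix $x \in \mathcal{S}^n_+$ and note that $\omega \mapsto f(x, Z(\omega)) = c \bullet x + \varphi(Z(\omega) - T \bullet x)$ is measurable, because $\varphi$ is continuous (hence Borel measurable) and $Z$ is a random vector. To bound its $L^1$-norm I would use the Lipschitz estimate $|\varphi(Z(\omega) - T \bullet x)| \le L_\varphi \|Z(\omega) - T \bullet x\| \le L_\varphi\big(\|Z(\omega)\| + \|T \bullet x\|\big)$, exploiting $\varphi(0) = 0$. Integrating and applying the change-of-variables (transfer) formula turns $\int_\Omega \|Z(\omega)\|\,\mathbb{P}(d\omega)$ into the first moment of $\mathbb{P} \circ Z^{-1}$, which is finite by the assumption $\mathbb{P} \circ Z^{-1} \in \mathcal{M}^1_s$. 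This is the step that genuinely uses all three hypotheses, and I expect it to be the most substantive, though still routine.

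For convexity, I would argue pointwise. For each fixed $z$, the function $x \mapsto f(x,z) = c \bullet x + \varphi(z - T \bullet x)$ is convex, being the sum of the linear term $c \bullet x$ and the composition of the convex function $\varphi$ with the affine map $x \mapsto z - T \bullet x$. Evaluating at $z = Z(\omega)$ for every $\omega$ yields the convexity inequality for $F$ almost surely, which is exactly convexity of $F$ with respect to the pointwise ($\mathbb{P}$-a.s.) order on $L^1$.

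For Lipschitz continuity, I would take $x_1, x_2 \in \mathcal{S}^n_+$ and estimate pointwise
\begin{align*}
|F(x_1)(\omega) - F(x_2)(\omega)| &\le |c \bullet (x_1 - x_2)| + |\varphi(Z(\omega) - T \bullet x_1) - \varphi(Z(\omega) - T \bullet x_2)| \\
&\le \|c\|\,\|x_1 - x_2\| + L_\varphi \|T \bullet (x_1 - x_2)\| \\
&\le (\|c\| + L_\varphi \|T\|)\,\|x_1 - x_2\|,
\end{align*}
using the Cauchy--Schwarz inequality for the Frobenius product, the Lipschitz property of $\varphi$, and the operator-norm bound $\|T \bullet v\| \le \|T\|\,\|v\|$. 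The crucial observation is that this estimate is \emph{uniform} in $\omega$; integrating against the probability measure $\mathbb{P}$ (of total mass $1$) therefore leaves the constant unchanged, giving $\|F(x_1) - F(x_2)\|_{L^1} \le (\|c\| + L_\varphi \|T\|)\,\|x_1 - x_2\|$, as claimed.
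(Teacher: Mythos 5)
Your proof is correct and follows essentially the same route as the paper's: integrability via the Lipschitz bound on $\varphi$ combined with the first-moment assumption, convexity pointwise in $z$ from convexity of $\varphi$ composed with an affine map, and the Lipschitz estimate carried out pointwise and then integrated. The only (harmless) difference is that you normalize with $\varphi(0)=0$ --- which is indeed valid, by strong duality or since $q\in\mathcal{S}^m_+$ makes $y=0$ optimal --- whereas the paper simply carries the finite constant $|\varphi(0)|$ through the estimate without evaluating it.
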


\begin{proof}
For any $x \in \mathcal{S}^n_+$ we have
\begin{align*}
\|F(x)\|_{L^1} &= \int_{\mathbb{R}^s} |c \bullet x + \varphi(z - T \bullet x)|~(\mathbb{P} \circ Z^{-1})(dz) \\
&\leq |c \bullet x| + |\varphi(0)| + \int_{\mathbb{R}^s} |\varphi(z - T \bullet x) - \varphi(0)|~(\mathbb{P} \circ Z^{-1})(dz) \\
&\leq |c \bullet x| + |\varphi(0)| + L_\varphi \|T \bullet x\| + L_\varphi \int_{\mathbb{R}^s} \|z\|~(\mathbb{P} \circ Z^{-1})(dz) < \infty
\end{align*}
by Lemma \ref{LemmaRecourseFunctional}.

\smallskip

For any $x_1, x_2 \in \mathcal{S}^n_+$, $\lambda \in [0,1]$ and $z \in \mathbb{R}^s$, the convexity of $\varphi$ yields
$$
f(\lambda x_1 + (1-\lambda)x_2, z) \leq \lambda f(x_1,z) + (1-\lambda)f(x_2,z)
$$
and thus in particular $F(\lambda x_1 + (1-\lambda)x_2) \leq \lambda F(x_1) + (1-\lambda)F(x_2)$ with respect to the $\mathbb{P}$-almost sure partial order, proving the asserted convexity of $F$.

\smallskip

Finally,
\begin{align*}
\|F(x_1)-F(x_2)\|_{L^1} &= \int_{\mathbb{R}^s} |c \bullet (x_1 - x_2) + \varphi(z - T \bullet x_1) - \varphi(z - T \bullet x_2)|~(\mathbb{P} \circ Z^{-1})(dz) \\
& \leq \|c\| \cdot \|x_1-x_2\| + L_\varphi \cdot \|T\| \cdot \|x_1-x_2\|
\end{align*}
holds for all $x_1, x_2 \in \mathcal{S}^n_+$. \qed
\end{proof}

\begin{definition}
A mapping $\mathcal{R}: \mathcal{X} \to \mathbb{R} \cup \lbrace \infty \rbrace$ defined on some linear subspace $\mathcal{X}$ of $L^0(\Omega, \mathcal{F}, \mathbb{P})$ containing the constants is called a \textbf{convex risk measure} if the following conditions are fulfilled:
\begin{enumerate}
\item \textbf{(Convexity}) For any $Z_1, Z_2 \in \mathcal{X}$ and $\lambda \in [0,1]$ we have
$$
\mathcal{R}[\lambda Z_1 + (1-\lambda)Z_2] \leq \lambda \mathcal{R}[Z_1] + (1 - \lambda) \mathcal{R}[Z_2].
$$
\item (\textbf{Monotonicity}) $\mathcal{R}[Z_1] \leq \mathcal{R}[Z_2]$ for all $Z_1, Z_2 \in \mathcal{X}$ satisfying $Z_1 \leq Z_2$ with respect to the $\mathbb{P}$-almost sure partial order. \ \\ \vspace{-5pt}
\item (\textbf{Translation equivariance}) $\mathcal{R}[Z_1 + z_2] = \mathcal{R}[Z_1] + z_2$ for all $Z_1 \in \mathcal{X}$ and $z_2 \in \mathbb{R}$.
\end{enumerate}
A convex risk measure $\mathcal{R}$ is \textbf{coherent} if the following holds true:
\begin{enumerate}
\item[4.] (\textbf{Positive homogeneity}) $\mathcal{R}[z_2 Z_1] = z_2 \cdot \mathcal{R}[Z_1]$ for all $Z_1 \in \mathcal{X}$ and $z_2 \in [0,\infty)$.
\end{enumerate} 
\end{definition}

\begin{definition}\label{LawInvariance}
A mapping $\mathcal{R}: L^0(\Omega, \mathcal{F}, \mathbb{P}) \supseteq \mathcal{X} \to \mathbb{R} \cup \lbrace \infty \rbrace$ is called \textbf{law-invariant} if for all $Z_1, Z_2 \in L^0(\Omega, \mathcal{F}, \mathbb{P})$ with $\mathbb{P} \circ Z_1^{-1} = \mathbb{P} \circ Z_2^{-1}$ we have $\mathcal{R}[Z_1] = \mathcal{R}[Z_2]$.
\end{definition}

We shall give some examples of risk-measures frequently used in stochastic programming
as listed in \cite{RuszczynskiShapiro2003}, pp. 447-448, and \cite{ShapiroDentchevaRuszczynski2009}.
Later we will give extensive formulations of discrete mean-risk SDPs based on these risk-measures:  

\begin{itemize}
\item[(i)]
The expectation $\mathbb{E}: L^1(\Omega, \mathcal{F}, \mathbb{P}) \to \mathbb{R}$ 
is a law-invariant coherent risk-measure.\\

\item[(ii)]
The expected excess over threshold $\eta \in \mathbb{R}$ (as used in \cite{SchultzTiedemann2006})
is the mapping $\mathbb{EE}_\eta : L^1(\Omega, \mathcal{F}, \mathbb{P}) \rightarrow \mathbb{R}$
defined by
\begin{align*}
\mathbb{EE}_\eta[Y] = \int_\Omega \max\{ \, Y(\omega) - \eta, 0 \, \} \; \mathbb{P}(\text{d}\omega).
\end{align*}
This is a non-decreasing, convex and law-invariant risk measure, but in general not translation-equivariant.\\

\item[(iii)]
The conditional value-at-risk at level $\alpha \in \left( 0,1 \right)$
\begin{equation}
\label{DefCVAR}
\text{$\mathbb{CV}$@R}_\alpha: L^1(\Omega, \mathcal{F}, \mathbb{P}) \to \mathbb{R}, \; \text{$\mathbb{CV}$@R}_\alpha[Y] = \min_{\eta \in \mathbb{R}} \big\{ \, \eta + \frac{1}{1 - \alpha} \mathbb{EE}_\eta(Y) \, \big\}
\end{equation}
is law-invariant and coherent (cf. \cite{Pflug2000}).\\

\item[(iv)]
The value-at-risk at level $\alpha \in \left( 0,1 \right)$
$$
\text{$\mathbb{V}$@R}_\alpha: L^0(\Omega, \mathcal{F}, \mathbb{P}) \to \mathbb{R}, \;
\text{$\mathbb{V}$@R}_\alpha[Y] = \inf\{ \, t \ | \ \mathbb{P}( Z(\omega) \leq t ) 
\geq \alpha \, \}
$$
is nondecreasing, law-invariant, translation-equivariant and positively homogenous, but in general non-convex.\\

\item[(v)]
The upper semi-deviation of order $p$ is the mapping $\text{$\mathbb{M}$ad}^+_p : L^p(\Omega, \mathcal{F}, \mathbb{P}) \rightarrow \mathbb{R}$ defined by 
\begin{align*}
\text{$\mathbb{M}$ad}^+_p[Y] = \Big( \int \, \max\{ 0 , Y(\omega) - \mathbb{E}_\mathbb{P}[Z] \}^p 
\; \mathbb{P}(\text{d}\omega) \Big)^{\frac{1}{p}}.
\end{align*}
For $\rho \in \left[0,1\right]$ this gives rise to the law-invariant and coherent risk measure
$\mathbb{E} + \rho \, \mathbb{M}\text{ad}_p$
(cf. \cite{ShapiroDentchevaRuszczynski2009}, p. 276).\\

\end{itemize}

\begin{proposition} \label{PropConvexity}
Assume A1 and A2, let $\mathcal{X}$ be a convex subset of $L^0(\Omega, \mathcal{F}, \mathbb{P})$ that contains $F(\mathcal{S}^n_+)$ and fix a convex and nondecreasing mapping $\mathcal{R}: \mathcal{X} \to \mathbb{R}$. Then $Q_\mathcal{R}$ is finite and convex on $\mathcal{S}^n_+$. In particular, problem \eqref{OptimizationWithRiskMeasures} is convex.
\end{proposition}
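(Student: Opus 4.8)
The plan is to exploit the representation $Q_\mathcal{R} = \mathcal{R} \circ F$, where $F(x) = f(x, Z(\cdot))$, and to read off convexity of $Q_\mathcal{R}$ as the convexity of a composition of the convex map $F$ with the convex and nondecreasing functional $\mathcal{R}$. Finiteness is the easy part: since $F(\mathcal{S}^n_+) \subseteq \mathcal{X}$ by assumption and $\mathcal{R}$ takes values in $\mathbb{R}$, we have $Q_\mathcal{R}(x) = \mathcal{R}[F(x)] \in \mathbb{R}$ for every $x \in \mathcal{S}^n_+$, so nothing beyond the hypotheses is needed here.

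For convexity I would fix $x_1, x_2 \in \mathcal{S}^n_+$ and $\lambda \in [0,1]$ and first record that $F$ is convex with respect to the $\mathbb{P}$-almost sure partial order, i.e. $F(\lambda x_1 + (1-\lambda) x_2) \leq \lambda F(x_1) + (1-\lambda) F(x_2)$ almost surely. This is precisely the convexity inequality established in the proof of Lemma \ref{LemmaF}; note that it rests only on the convexity of $\varphi$ from Lemma \ref{LemmaRecourseFunctional} together with the affinity of $z - T \bullet x$ in $x$, so it is available under A1 and A2 alone, without invoking any moment condition.

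The core of the argument is then a short chain. Because $\mathcal{X}$ is convex and contains both $F(x_1)$ and $F(x_2)$, the random variable $\lambda F(x_1) + (1-\lambda) F(x_2)$ lies in $\mathcal{X}$, so that $\mathcal{R}$ may be evaluated at it; likewise $F(\lambda x_1 + (1-\lambda) x_2) \in F(\mathcal{S}^n_+) \subseteq \mathcal{X}$. Applying monotonicity of $\mathcal{R}$ to the almost-sure inequality above and then convexity of $\mathcal{R}$ yields
\begin{align*}
Q_\mathcal{R}(\lambda x_1 + (1-\lambda) x_2) &= \mathcal{R}[F(\lambda x_1 + (1-\lambda) x_2)] \\
&\leq \mathcal{R}[\lambda F(x_1) + (1-\lambda) F(x_2)] \\
&\leq \lambda \mathcal{R}[F(x_1)] + (1-\lambda) \mathcal{R}[F(x_2)] \\
&= \lambda Q_\mathcal{R}(x_1) + (1-\lambda) Q_\mathcal{R}(x_2),
\end{align*}
which is the asserted convexity of $Q_\mathcal{R}$. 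Since $X \subseteq \mathcal{S}^n_+$ is convex by assumption and $Q_\mathcal{R}$ is a finite convex function, the minimization problem \eqref{OptimizationWithRiskMeasures} is convex, giving the final claim.

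The step I expect to require the most care is the interplay of the two properties of $\mathcal{R}$ in the first inequality of the chain: one must apply monotonicity to an almost-sure inequality between genuinely different random variables (not merely to constants) and only afterwards invoke convexity, while simultaneously verifying that every argument fed to $\mathcal{R}$ actually lies in its domain $\mathcal{X}$ — which is exactly why $\mathcal{X}$ is assumed convex and to contain $F(\mathcal{S}^n_+)$. Everything else is a direct composition of facts already in hand.
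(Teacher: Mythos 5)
Your proof is correct and follows essentially the same route as the paper: finiteness from the finiteness of $\mathcal{R}$ on $\mathcal{X} \supseteq F(\mathcal{S}^n_+)$, and convexity via the chain that first applies monotonicity of $\mathcal{R}$ to the $\mathbb{P}$-almost sure convexity inequality for $F$ and then convexity of $\mathcal{R}$. Your added observation that the convexity of $F$ rests only on A1 and A2 (and not on the moment condition of Lemma \ref{LemmaF}) is a welcome extra precision, since the proposition itself does not assume $\mathbb{P} \circ Z^{-1} \in \mathcal{M}^1_s$ while the paper cites Lemma \ref{LemmaF} wholesale.
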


\begin{proof}
Finiteness of $Q_\mathcal{R}$ follows directly from the finiteness of $\mathcal{R}$. Furthermore, for any $x_1, x_2 \in \mathcal{S}^n_+$ and $\lambda \in [0,1]$ we have
\begin{align*}
Q_\mathcal{R}(\lambda x_1 + (1-\lambda)x_2) &= \mathcal{R}[F(\lambda x_1 + (1-\lambda)x_2)] \\
&\leq \mathcal{R}[\lambda F(x_1) + (1-\lambda)F(x_2)] \\
&\leq \lambda \mathcal{R}[F(x_1)] + (1-\lambda)\mathcal{R}[F(x_2)].
\end{align*}
The first inequality above holds due to the monotonicity of $\mathcal{R}$ and the convexity of $F$ (by Lemma \ref{LemmaF}), while the second one is justified by the convexity of $\mathcal{R}$. \qed
\end{proof}

\begin{proposition}
Assume A1, A2 and that the support of $\mathbb{P} \circ Z^{-1}$ is bounded. Furthermore, let $\mathcal{R}: L^\infty(\Omega, \mathcal{F}, \mathbb{P}) \to \mathbb{R} \cup \lbrace \infty \rbrace$ be a coherent risk measure and assume that there is some $Y \in L^\infty(\Omega, \mathcal{F}, \mathbb{P})$ such that $\mathcal{R}[Y] < \infty$. Then $Q_\mathcal{R}$ is finite and Lipschitz continuous with constant $\|c\| + L_\varphi \cdot \|T\|$ on $\mathcal{S}^n_+$.
\end{proposition}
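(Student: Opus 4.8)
The plan is to reduce the statement to two ingredients: first, that $F(x)$ is essentially bounded for every $x\in\mathcal{S}^n_+$, so that $Q_\mathcal{R}(x)=\mathcal{R}[F(x)]$ is even well-defined as a value in $\mathbb{R}\cup\{\infty\}$; second, that a coherent risk measure which is finite at a single point is automatically real-valued and $1$-Lipschitz on all of $L^\infty(\Omega,\mathcal{F},\mathbb{P})$ with respect to $\|\cdot\|_\infty$. Once both are established, the asserted constant follows by transcribing the estimate of Lemma~\ref{LemmaF} to the essential-supremum norm and chaining it with the $1$-Lipschitz bound on $\mathcal{R}$.

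First I would verify $F(\mathcal{S}^n_+)\subseteq L^\infty(\Omega,\mathcal{F},\mathbb{P})$. Since the support of $\mathbb{P}\circ Z^{-1}$ is bounded, $Z(\cdot)$ is essentially bounded, and by Lemma~\ref{LemmaRecourseFunctional} the function $\varphi$ is finite and Lipschitz, hence bounded on the bounded set $\{\,z-T\bullet x \; | \; z\in\mathrm{supp}(\mathbb{P}\circ Z^{-1})\,\}$ for each fixed $x$. Consequently $F(x)=c\bullet x+\varphi(Z(\cdot)-T\bullet x)$ lies in $L^\infty$, and $Q_\mathcal{R}(x)=\mathcal{R}[F(x)]$ is well-defined.

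The key step is the analysis of $\mathcal{R}$ on $L^\infty$, for which I would exploit only monotonicity and translation equivariance. For arbitrary $V\in L^\infty$ one has $Y-\|V-Y\|_\infty\le V\le Y+\|V-Y\|_\infty$ almost surely, so monotonicity together with translation equivariance gives $\mathcal{R}[Y]-\|V-Y\|_\infty\le\mathcal{R}[V]\le\mathcal{R}[Y]+\|V-Y\|_\infty$; since $\mathcal{R}[Y]\in\mathbb{R}$ by hypothesis, $\mathcal{R}$ is finite on all of $L^\infty$. Applying the same sandwiching to a pair $V_1,V_2\in L^\infty$ through $V_1\le V_2+\|V_1-V_2\|_\infty$ yields $\mathcal{R}[V_1]-\mathcal{R}[V_2]\le\|V_1-V_2\|_\infty$, and by symmetry $|\mathcal{R}[V_1]-\mathcal{R}[V_2]|\le\|V_1-V_2\|_\infty$, i.e. $\mathcal{R}$ is $1$-Lipschitz in the $L^\infty$ norm. (Convexity and positive homogeneity are not actually needed here; the latter only supplies the normalization $\mathcal{R}[0]=0$.)

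Finally I would combine the two. Finiteness of $Q_\mathcal{R}$ is immediate from the preceding steps. For the Lipschitz estimate, observe that for every $\omega$ one has $|F(x_1)(\omega)-F(x_2)(\omega)|\le|c\bullet(x_1-x_2)|+|\varphi(Z(\omega)-T\bullet x_1)-\varphi(Z(\omega)-T\bullet x_2)|\le\|c\|\,\|x_1-x_2\|+L_\varphi\|T\|\,\|x_1-x_2\|$, using the Cauchy--Schwarz inequality for the Frobenius product and the Lipschitz constant of $\varphi$ from Lemma~\ref{LemmaRecourseFunctional}; taking the essential supremum gives $\|F(x_1)-F(x_2)\|_\infty\le(\|c\|+L_\varphi\|T\|)\|x_1-x_2\|$. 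Chaining this with the $1$-Lipschitz property of $\mathcal{R}$ produces $|Q_\mathcal{R}(x_1)-Q_\mathcal{R}(x_2)|\le(\|c\|+L_\varphi\|T\|)\|x_1-x_2\|$, as claimed. The only genuinely delicate point is the third step — deducing global finiteness and the $1$-Lipschitz property of $\mathcal{R}$ from the axioms together with finiteness at the single point $Y$; everything else is a routine adaptation of Lemma~\ref{LemmaF} from the $L^1$- to the $L^\infty$-setting.
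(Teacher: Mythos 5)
Your proof is correct and follows the same overall architecture as the paper's: show $F(\mathcal{S}^n_+) \subseteq L^\infty(\Omega,\mathcal{F},\mathbb{P})$ using the bounded support and the Lipschitz continuity of $\varphi$ from Lemma~\ref{LemmaRecourseFunctional}, establish that $\mathcal{R}$ is finite and $1$-Lipschitz on $L^\infty$, and chain the two estimates. Two differences are worth noting. First, where the paper simply cites \cite[Lemma 4.3]{FoellmerSchied2004} for the finiteness and $1$-Lipschitz continuity of $\mathcal{R}$, you prove it inline from monotonicity and translation equivariance via the sandwich $Y - \|V-Y\|_{L^\infty} \le V \le Y + \|V-Y\|_{L^\infty}$; this is exactly the standard argument behind that lemma, and your remark that convexity and positive homogeneity are not needed for this step is accurate (under the paper's sign conventions for monotonicity and translation equivariance). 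Second, and more substantively, your final estimate is more careful than the paper's: the paper passes from $\|F(x_1)-F(x_2)\|_{L^\infty}$ to $\|F(x_1)-F(x_2)\|_{L^1}$, but on a probability space this inequality runs the wrong way (one has $\|\cdot\|_{L^1} \le \|\cdot\|_{L^\infty}$, not the converse), so the published chain of inequalities contains a glitch. Your route --- bounding $|F(x_1)(\omega)-F(x_2)(\omega)|$ pointwise by $(\|c\| + L_\varphi\|T\|)\,\|x_1-x_2\|$ and then taking the essential supremum --- is the correct repair, and it works precisely because the estimate used in the proof of Lemma~\ref{LemmaF} holds uniformly in $z$ rather than only after integration. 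In short, your proposal is not only correct but slightly tightens the paper's own argument.
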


\begin{proof}
$\mathcal{R}$ is finite and Lipschitz continuous with constant $1$ with respect to the $L^\infty$-norm on by $L^\infty(\Omega, \mathcal{F}, \mathbb{P})$ by \cite[Lemma 4.3]{FoellmerSchied2004}.

\smallskip

For any $x \in \mathcal{S}^n_+$, the mapping $f(x,\cdot)$ is continuous by Lemma \ref{LemmaRecourseFunctional}, which implies
$$
\sup_{z \in \mathrm{supp}(\mathbb{P} \circ Z^{-1})}|f(x,z)| < \infty.
$$
Thus, $F(\mathcal{S}^n_+) \subseteq  L^\infty(\Omega, \mathcal{F}, \mathbb{P})$, which implies the asserted finiteness of $Q_\mathcal{R}$.

\smallskip

Furthermore, for any $x_1, x_2 \in \mathcal{S}^n_+$, we have
\begin{align*}
|Q_\mathcal{R}(x_1) - Q_\mathcal{R}(x_2)| &= |\mathcal{R}[F(x_1)] - \mathcal{R}[F(x_2)]| \\
&\leq \|F(x_1) - F(x_2)\|_{L^\infty} \\
&\leq \|F(x_1) - F(x_2)\|_{L^1} \\
&\leq (\|c\| + L_\varphi \cdot \|T\|) \cdot \|x_1 - x_2\|
\end{align*}
by Lemma \ref{LemmaF}. \qed
\end{proof}

If the support of $\mathbb{P} \circ Z^{-1}$ is unbounded, $F(\mathcal{S}^n_+)$ may fail to be a subset of $L^\infty(\Omega, \mathcal{F}, \mathbb{P})$. While Lipschitz continuity with respect to any $L^p$-norm with $p < \infty$ does not hold for general coherent risk measures, the Conditional Value-at-Risk $\mathrm{CVaR}_\alpha$ is known to be Lipschitz continuous with respect to the $L^1$-norm with constant $\frac{1}{1-\alpha}$ (cf. \cite[Corollary 3.7]{Pichler2017}). Using the Kusuoka representation (cf. \cite{Kusuoka2001}), this allows to replace the boundedness of the support of $\mathbb{P} \circ Z^{-1}$ with a less restrictive assumption on the moments of $\mathbb{P} \circ Z^{-1}$ for special classes of risk measures.

\begin{definition}
Random variables $Z_1$ and $Z_2$ are called \textbf{comonotonic} if $(Z_1,Z_2)$ is distributionally equivalent to $(F^{-1}_{Z_1}(U), F^{-1}_{Z_2}(U))$ where $U$ is uniformly distributed on $[0,1]$.

\smallskip

A coherent risk measure $\mathcal{R}: \mathcal{X} \to \mathbb{R}$ is said to be \textbf{comonotonic} if for any two comonotonic random variables $Z_1, Z_2 \in \mathcal{X}$ we have $\mathcal{R}(Z_1 + Z_2) = \mathcal{R}(Z_1) + \mathcal{R}(Z_2)$.
\end{definition}

For a discussion of comonotonicity we refer to \cite{DhaeneEtAl2002} and \cite{DhaeneEtAl2006}. A proof of the following result is given in \cite[Theorem 2]{Shapiro2013}:

\begin{theorem}
\label{TheoremComonotonicityShapiro}
A law-invariant coherent risk measure $\mathcal{R}: L^p(\Omega, \mathcal{F}, \mathbb{P}) \to \mathbb{R}$ with \\ $p \in [1,\infty)$ is comonotonic if and only if there exists probability measure $\nu$ on $[0,1)$ such that
\begin{equation}
\label{ComonotonicKusuoka}
\mathcal{R}(Y) = \int_{0}^{1} \text{$\mathbb{CV}$@R}_\alpha(Y)~\nu(d \alpha)
\end{equation}
holds for all $Y \in L^p(\Omega, \mathcal{F}, \mathbb{P})$. Furthermore, the measure $\nu$ in representation \eqref{ComonotonicKusuoka} is defined uniquely.
\end{theorem}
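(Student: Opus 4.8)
The plan is to prove the two implications separately and then settle uniqueness, using the quantile (Value-at-Risk) representation of $\text{$\mathbb{CV}$@R}_\alpha$ as the central tool. Throughout I would write $\text{$\mathbb{V}$@R}_u(Y)$ for the $u$-quantile, so that the standard average-of-upper-tail identity reads
$$
\text{$\mathbb{CV}$@R}_\alpha(Y) = \frac{1}{1-\alpha}\int_\alpha^1 \text{$\mathbb{V}$@R}_u(Y)\,du .
$$
For sufficiency, suppose $\mathcal{R}$ has the integral form \eqref{ComonotonicKusuoka}. I would first recall that for each fixed $\alpha$ the functional $\text{$\mathbb{CV}$@R}_\alpha$ is law-invariant and coherent, and that it is \emph{comonotonic additive}: for comonotonic $Z_1,Z_2$ one has $\text{$\mathbb{V}$@R}_u(Z_1+Z_2)=\text{$\mathbb{V}$@R}_u(Z_1)+\text{$\mathbb{V}$@R}_u(Z_2)$ for almost every $u$, and the displayed identity then gives $\text{$\mathbb{CV}$@R}_\alpha(Z_1+Z_2)=\text{$\mathbb{CV}$@R}_\alpha(Z_1)+\text{$\mathbb{CV}$@R}_\alpha(Z_2)$. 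Since law-invariance, coherence, and comonotonic additivity are all preserved under mixing against the probability measure $\nu$, the integral inherits them; finiteness on $L^p$ follows from the $L^1$-Lipschitz bound for $\text{$\mathbb{CV}$@R}_\alpha$ together with $\nu([0,1))=1$.

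The necessity direction is the substantive one. I would start from the Kusuoka representation of a law-invariant coherent risk measure, namely the existence of a set $\mathcal{V}$ of probability measures on $[0,1)$ with $\mathcal{R}(Y)=\sup_{\nu\in\mathcal{V}}\int_0^1 \text{$\mathbb{CV}$@R}_\alpha(Y)\,\nu(d\alpha)$, and show that comonotonicity collapses this supremum to a single measure. The cleanest route is via Choquet theory: a monotone, positively homogeneous, comonotonic additive functional equals the Choquet integral with respect to the capacity $c(A):=\mathcal{R}(\mathbbm{1}_A)$ (Schmeidler's theorem). Law-invariance then forces $c(A)$ to depend on $A$ only through $\mathbb{P}(A)$, so $c(A)=g(\mathbb{P}(A))$ for a nondecreasing distortion $g:[0,1]\to[0,1]$ with $g(0)=0$ and $g(1)=1$, and subadditivity (a consequence of convexity plus positive homogeneity) forces $g$ to be \emph{concave}.

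It remains to convert the concave-distortion Choquet integral into the form \eqref{ComonotonicKusuoka}. A layer-cake computation rewrites $\mathcal{R}$ as a spectral risk measure $\mathcal{R}(Y)=\int_0^1 \text{$\mathbb{V}$@R}_u(Y)\,\phi(u)\,du$ with a nondecreasing density $\phi$ determined by $g$. On the other hand, applying Fubini to \eqref{ComonotonicKusuoka} and the displayed quantile identity shows that the right-hand side has spectral density
$$
\phi(u)=\int_{[0,u]}\frac{1}{1-\alpha}\,\nu(d\alpha),
$$
which is nondecreasing with $\int_0^1\phi(u)\,du=\nu([0,1))=1$. Matching the two spectral densities identifies $\nu$ from $\phi$, and since the correspondence $\nu\mapsto\phi$ between probability measures on $[0,1)$ and nondecreasing densities integrating to $1$ is a bijection, this simultaneously yields existence and uniqueness of $\nu$; concretely, $\nu$ can be read off as $(1-\alpha)\,d\phi(\alpha)$.

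The main obstacle I anticipate is the necessity direction, and within it two technical points. First, Schmeidler's representation is classically stated for bounded random variables, so extending the Choquet integral identity to unbounded $Y\in L^p$ requires an approximation argument exploiting the $L^p$-continuity (indeed Lipschitz continuity) of $\mathcal{R}$ to pass to the limit. Second, one must control the boundary behavior of $g$, equivalently the accumulation of $\phi$, as $u\to 1$: this is precisely what dictates that $\nu$ is supported on $[0,1)$ rather than $[0,1]$, and the requirement that $\mathcal{R}$ be finite on all of $L^p$ (rather than merely $L^\infty$) is what rules out an atom of $\nu$ at $1$. Handling these endpoint and integrability issues carefully is where the proof must be most delicate.
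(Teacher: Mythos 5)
The paper does not prove this theorem at all: it is quoted as a known result, with the proof deferred to \cite[Theorem 2]{Shapiro2013}, so there is no in-paper argument to compare yours against line by line. Judged on its own terms, your outline follows the classical Choquet-theoretic route: Schmeidler's representation of monotone, comonotonically additive functionals as Choquet integrals, law invariance reducing the capacity to a distortion $g\circ\mathbb{P}$, subadditivity forcing $g$ concave, and the Fubini dictionary $\phi(u)=\int_{[0,u]}(1-\alpha)^{-1}\,\nu(d\alpha)$, $\nu(d\alpha)=(1-\alpha)\,d\phi(\alpha)$ between probability measures on $[0,1)$ and nondecreasing spectral densities integrating to one. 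This is essentially sound, and your diagnosis of the endpoint issue is exactly right: continuity of the distortion at $0$ (equivalently, no essential-supremum component, no mass of $\nu$ at $1$) is what finiteness of $\mathcal{R}$ on all of $L^p$, $p<\infty$, enforces. It is a genuinely different route from the cited one: Shapiro's argument stays inside the Kusuoka/spectral representation on $L^p$ and shows that comonotone additivity collapses the supremum over the Kusuoka set to a single measure, thereby avoiding both Schmeidler's theorem and the $L^\infty$-to-$L^p$ extension, whereas your path is more self-contained and yields the distortion/spectral dictionary explicitly.

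Three points need tightening. First, in the sufficiency direction your finiteness argument is incorrect: the $L^1$-Lipschitz constant of $\text{$\mathbb{CV}$@R}_\alpha$ is $(1-\alpha)^{-1}$, and $\int_0^1(1-\alpha)^{-1}\,\nu(d\alpha)$ need \emph{not} be finite just because $\nu$ is a probability measure on $[0,1)$ --- the paper's own proposition following this theorem imposes $L_\nu<\infty$ as an additional hypothesis precisely for this reason. Fortunately you do not need it: $\mathcal{R}$ is real-valued by assumption, so only comonotone additivity must be verified, and that part of your argument stands. Second, your starting tools (the Kusuoka representation, the claim that every $t\in[0,1]$ arises as $\mathbb{P}(A)$, and the test-function argument underlying uniqueness) all require the probability space to be atomless; the theorem fails on finite probability spaces (uniqueness in particular), so this hypothesis --- which the paper only makes explicit later, and which is standing in \cite{Shapiro2013} --- must be stated. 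Third, the $L^\infty$-to-$L^p$ extension you flag is a real gap but a routine one: truncate monotonically, use $\text{$\mathbb{CV}$@R}_\alpha[\,\cdot\,]\geq\mathbb{E}[\,\cdot\,]$ as a uniform integrable minorant to apply monotone convergence on the mixture side, and use that a finite coherent risk measure on $L^p$ is norm-Lipschitz (extended Namioka--Klee plus positive homogeneity) on the $\mathcal{R}$ side. With these repairs the proposal is a complete and correct proof strategy.
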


\begin{example} Using $\delta_{\alpha_0}$ to denote the Dirac measure at $\alpha_0 \in [0,1)$
$$
\text{$\mathbb{CV}$@R}_{\alpha_0}(Y) = \int_{0}^1 \text{$\mathbb{CV}$@R}_\alpha(Y)~\delta_{\alpha_0}(d \alpha)
$$
and, in particular,
$$
\mathbb{E}[Y] = \int_{0}^1 \text{$\mathbb{CV}$@R}_\alpha(Y)~\delta_{0}(d \alpha)
$$
hold for all $Y \in L^1(\Omega, \mathcal{F}, \mathbb{P})$.
\end{example}

\begin{proposition}
Let $\mathcal{R}: L^p(\Omega, \mathcal{F}, \mathbb{P}) \to \mathbb{R}$ with $p \in [1,\infty)$ be a law-invariant, comonotonic coherent risk measure. Assume A1, A2, $\mathbb{P} \circ Z^{-1} \in \mathcal{M}^p_s$ and
$$
L_\nu := \int_{0}^{1} \frac{1}{1-\alpha}~\nu(d \alpha) < \infty,
$$
where $\nu$ denotes the uniquely defined probability measure form representation \eqref{ComonotonicKusuoka}. Then $Q_\mathcal{R}$ is Lipschitz continuous with constant $L_\nu \cdot (\|c\| + L_\varphi \cdot \|T\|)$ on $\mathcal{S}^n_+$.
\end{proposition}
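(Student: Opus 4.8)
The plan is to assemble the result from three facts already in hand: the Kusuoka representation of $\mathcal{R}$ furnished by Theorem~\ref{TheoremComonotonicityShapiro}, the $L^1$-Lipschitz continuity of the Conditional Value-at-Risk with constant $\tfrac{1}{1-\alpha}$ (Corollary~3.7 in \cite{Pichler2017}), and the $L^1$-Lipschitz continuity of $F$ with constant $\|c\|+L_\varphi\cdot\|T\|$ from Lemma~\ref{LemmaF}. Before combining these, I would first verify that $Q_\mathcal{R}$ is well-defined, i.e.\ that $F(\mathcal{S}^n_+)\subseteq L^p(\Omega,\mathcal{F},\mathbb{P})$. For fixed $x\in\mathcal{S}^n_+$, the Lipschitz continuity of $\varphi$ from Lemma~\ref{LemmaRecourseFunctional} gives the pointwise bound
$$
|f(x,z)| \leq |c\bullet x| + |\varphi(0)| + L_\varphi\|T\bullet x\| + L_\varphi\|z\|,
$$
so that $|f(x,Z(\cdot))|^p$ is dominated by an affine function of $\|Z(\cdot)\|^p$; the moment hypothesis $\mathbb{P}\circ Z^{-1}\in\mathcal{M}^p_s$ then yields $F(x)\in L^p$.

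Next I would apply Theorem~\ref{TheoremComonotonicityShapiro} to write, for arbitrary $x_1,x_2\in\mathcal{S}^n_+$,
$$
Q_\mathcal{R}(x_1)-Q_\mathcal{R}(x_2) = \int_0^1 \big(\text{$\mathbb{CV}$@R}_\alpha(F(x_1)) - \text{$\mathbb{CV}$@R}_\alpha(F(x_2))\big)~\nu(d\alpha),
$$
using linearity of the integral. Passing to absolute values inside the integral and then inserting the $L^1$-Lipschitz estimate $|\text{$\mathbb{CV}$@R}_\alpha(Y_1)-\text{$\mathbb{CV}$@R}_\alpha(Y_2)|\leq\tfrac{1}{1-\alpha}\|Y_1-Y_2\|_{L^1}$ leads to
$$
|Q_\mathcal{R}(x_1)-Q_\mathcal{R}(x_2)| \leq \int_0^1 \frac{1}{1-\alpha}~\nu(d\alpha)\cdot\|F(x_1)-F(x_2)\|_{L^1}.
$$

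Finally I would substitute the bound $\|F(x_1)-F(x_2)\|_{L^1}\leq(\|c\|+L_\varphi\cdot\|T\|)\|x_1-x_2\|$ from Lemma~\ref{LemmaF} and recognise the remaining integral as $L_\nu$, which delivers exactly the asserted Lipschitz constant $L_\nu\cdot(\|c\|+L_\varphi\cdot\|T\|)$. The computation itself is short; the only points demanding care are the measurability of $\alpha\mapsto\text{$\mathbb{CV}$@R}_\alpha(F(x_i))$ on $[0,1)$, which legitimises both the integral representation and the interchange of absolute value and integration, and the finiteness of the resulting bound. The latter is precisely what the hypothesis $L_\nu<\infty$ provides, while the former follows from the Kusuoka representation being well-posed for the $L^p$ variables $F(x_1),F(x_2)$ (which lie in $L^1$, so all occurring $L^1$-norms are finite). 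I thus expect no genuine obstacle beyond these routine justifications; the essence of the argument is the controlled blow-up of the $\tfrac{1}{1-\alpha}$ factor being tamed by $\nu$ through the assumption $L_\nu<\infty$.
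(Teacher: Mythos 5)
Your proposal is correct and follows essentially the same route as the paper: the Kusuoka representation from Theorem~\ref{TheoremComonotonicityShapiro}, the $L^1$-Lipschitz bound for $\text{$\mathbb{CV}$@R}_\alpha$ from \cite[Corollary 3.7]{Pichler2017}, and the Lipschitz estimate for $F$ from Lemma~\ref{LemmaF}, combined under the integral and identified with $L_\nu$. Your additional verification that $F(\mathcal{S}^n_+)\subseteq L^p(\Omega,\mathcal{F},\mathbb{P})$ is a sound (and welcome) piece of bookkeeping that the paper leaves implicit.
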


\begin{proof}
For any $x_1, x_2 \in \mathcal{S}^n_+$, we have
\begin{align*}
|Q_\mathcal{R}(x_1) - Q_\mathcal{R}(x_2)| &\leq \int_{0}^{1} |\mathrm{CVaR}_\alpha(F((x_1)) - \mathrm{CVaR}_\alpha(F((x_2))|~\nu(d \alpha) \\
&\leq \int_{0}^{1} \frac{1}{1-\alpha} \cdot \|F((x_1)) - F((x_2))\|_{L^1}~\nu(d \alpha) \\
&\leq \int_{0}^{1} \frac{1}{1-\alpha} \cdot (\|c\| + L_\varphi \cdot \|T\|) \cdot \|x_1 - x_2\|~\nu(d \alpha) \\
&= L_\nu \cdot (\|c\| + L_\varphi \cdot \|T\|) \cdot \|x_1 - x_2\|.
\end{align*}
The second inequality above holds due to \cite[Corollary 3.7]{Pichler2017}, while the third one is justified by Lemma \ref{LemmaF}. \qed
\end{proof}

We shall now study the dependence of $Q_\mathcal{R}$ on the underlying probability measure $\mathbb{P} \circ Z^1$. This is motivated by the fact that in applications the true probability distribution of the random parameter may be unknown. In such situations, one may work with an approximation if the optimal value function and the optimal solution set mapping of \eqref{OptimizationWithRiskMeasures} are at least semicontinuous with respect to changes of the underlying distribution.

\smallskip

Let $(\Omega_0, \mathcal{F}_0, \mathbb{P}_0)$ be an atomless probability space, i.e. assume that for any $A \in \mathcal{F}_0$ with $\mathbb{P}_0(A) > 0$ there exists some $B \subsetneq A$ with $B \in \mathcal{F}_0$ and $\mathbb{P}_0(B) > 0$, and fix any $p \geq 1$. Then for any $\nu \in \mathcal{M}^1_p$  there exists some $Z_\nu \in L^p(\Omega_0, \mathcal{F}_0, \mathbb{P}_0)$ such that $\mathbb{P}_0 \circ Z_\nu^{-1}$. Thus, given any law-invariant mapping $\mathcal{R}_0: L^p(\Omega_0, \mathcal{F}_0, \mathbb{P}_0) \to \mathbb{R}$, the function
$$
\Theta_{\mathcal{R}_0}: \mathcal{M}^1_p \to \mathbb{R}, \; \; \Theta_{\mathcal{R}_0}[\nu] := \mathcal{R}_0[Z_\nu]
$$
is well-defined. Furthermore, we can construct a mapping $\mathcal{R}_{\mathcal{R}_0}:  L^p(\Omega, \mathcal{F}, \mathbb{P}) \to \mathbb{R}$ by setting $\mathcal{R}_{\mathcal{R}_0}[Z_1] := \Theta_{\mathcal{R}_0}[\mathbb{P} \circ Z_1^{-1}]$. To ease the notation, we shall assume that $(\Omega, \mathcal{F}, \mathbb{P})$ itself is atomless. Given any law-invariant mapping $\mathcal{R}:  L^p(\Omega, \mathcal{F}, \mathbb{P}) \to \mathbb{R}$, we shall consider the function
$$
\mathcal{Q}_\mathcal{R}: \mathcal{S}^n_+ \times \mathcal{M}^p_s \to \mathbb{R}, \; \; \mathcal{Q}_\mathcal{R}(x,\mu) := \Theta_\mathcal{R}[\mu \circ f(x,\cdot)^{-1}].
$$
For the following analysis, we equip the space $\mathcal{P}(\mathbb{R}^s)$ with the topology of weak convergence, where a sequence $\lbrace \mu_k \rbrace_{k \in \mathbb{N}} \subseteq \mathcal{P}(\mathbb{R}^s)$ converges to some $\mu \in \mathcal{P}(\mathbb{R}^s)$, written $\mu_k  \stackrel{w}{\rightarrow} \mu$ if and only if
$$
\int_{\mathbb{R}^s} h(t)~\mu_k(dt) \rightarrow \int_{\mathbb{R}^s} h(t)~\mu(dt)
$$
holds for any bounded and continuous function $h: \mathbb{R}^s \to \mathbb{R}$. It is well known that even for linear recourse one cannot expect weak continuity of $\mathcal{Q}_\mathcal{R}$ on the entire space $\mathcal{S}^n_+ \times \mathcal{M}^p_s$. Along the lines of \cite{ClausKraetschmerSchultz2017}, we shall thus restrict the analysis to appropriate subspaces.

\begin{definition}
A set $\mathcal{M} \subseteq \mathcal{M}^p_s$ is called \textbf{locally uniformly $\|\cdot\|^p$-integrating} if for any $\mu \in \mathcal{M}$ and any $\epsilon > 0$ there exists some open neighborhood $\mathcal{N}$ of $\mu$ with respect to the topology of weak convergence such that
$$
\lim_{a \to \infty} \; \sup_{\nu \in \mathcal{N} \cap \mathcal{M}} \int_{\mathbb{R}^s} \mathbbm{1}_{(a,\infty)}(\|t\|^p) \cdot \|t\|^p~\nu(dt) \leq \epsilon. 
$$
\end{definition}

\begin{example}
\label{ExampleLocUnifIntSets}
(a) For any $K, \epsilon > 0$ and $p \geq 1$, the set
$$
U(\epsilon, K) := \lbrace \nu \in \mathcal{M}^p_s : \int_{\mathbb{R}^s} \|t\|^{1+\epsilon}~\nu(dt) \leq K \rbrace
$$
of measures having uniformly bounded moments of order $1 + \epsilon$ is locally uniformly $\|\cdot|^p$-integrating (cf. \cite[Lemma 2.69]{Claus2016}).

\smallskip

(b) For any $p \geq 1$ and compact set $\Xi \subset \mathbb{R}^s$, the set 
$$
\lbrace \nu \in \mathcal{M}^p_s : \int_{\Xi} 1~\nu(dt) = 1 \rbrace
$$
of measures with support in $\Xi$ is locally uniformly $\|\cdot\|^p$-integrating by \cite[Lemma 5.1]{KraetschmerSchiedZaehle2017}.

\smallskip

(c) Any singleton $\lbrace \mu \rbrace \subseteq \mathcal{M}^p_s$ is locally uniformly $\|\cdot\|^p$-integrating for any $p \geq 1$ by \cite[Lemma 5.2]{KraetschmerSchiedZaehle2017}.
\end{example}

\begin{theorem}
\label{TheoremStability}
Let $\mathcal{R}: L^p(\Omega, \mathcal{F}, \mathbb{P}) \to \mathbb{R}$ with $p \geq 1$ be law-invariant, convex and nondecreasing. Assume A1 and A2 and let $\mathcal{M} \subseteq \mathcal{M}^p_s$ be locally uniformly $\| \cdot \|^p$-integrating. Then the following statements hold true:
\begin{enumerate}
\item The restriction of $\mathcal{Q}_\mathcal{R}$ to the set $\mathcal{S}^n_+ \times \mathcal{M}$ is continuous with respect to the product topology of the the standard topology on $\mathcal{S}^n_+$ and the relative topology of weak convergence on $\mathcal{M}$.
\item The optimal value function
$$
\phi: \mathcal{M} \to \overline{\mathbb{R}}, \; \; \phi(\mu) := \min_x \lbrace \mathcal{Q}_\mathcal{R}(x,\mu) \; | \; x \in X \rbrace 
$$
is weakly upper semicontinuous.
\end{enumerate}
Additionally assume that $X$ is compact. Then
\begin{enumerate}
\item[3.] $\phi$ is weakly continuous. 
\item[4.] The optimal solution set mapping
$$
\Phi: \mathcal{M} \rightrightarrows \mathcal{S}^n_+, \; \; \Phi(\mu) := \mathrm{Argmin}_x \lbrace \mathcal{Q}_\mathcal{R}(x,\mu) \; | \; x \in X \rbrace 
$$
is weakly upper semicontinuous in the sense of Berge, i.e. for any $\mu_0 \in \mathcal{M}$ and any open set $\mathcal{O} \subseteq \mathcal{S}^{n}_+$ with $\Phi(\mu_0) \subseteq \mathcal{O}$ there exists a weakly open neighborhood $\mathcal{N}$ of $\mu_0$ such that $\Phi(\mu) \subseteq \mathcal{O}$ for all $\mu \in \mathcal{N} \cap \mathcal{M}$.
Furthermore, $\Phi(\mu)$ is nonempty and compact for any $\mu \in \mathcal{M}$.
\end{enumerate}
\end{theorem}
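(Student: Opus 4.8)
The plan is to reduce everything to the joint continuity asserted in Part 1 and then to invoke Berge's maximum theorem for Parts 3 and 4, with Part 2 following formally from Part 1. Since the standard topology on $\mathcal{S}^n_+$ and the relative topology of weak convergence on $\mathcal{M} \subseteq \mathcal{M}^p_s$ are both metrizable, their product is metrizable, so I would verify Part 1 sequentially: fix $(x,\mu) \in \mathcal{S}^n_+ \times \mathcal{M}$ and a sequence $(x_k,\mu_k)$ with $x_k \to x$ and $\mu_k \stackrel{w}{\rightarrow} \mu$, all $\mu_k \in \mathcal{M}$, and show $\mathcal{Q}_\mathcal{R}(x_k,\mu_k) \to \mathcal{Q}_\mathcal{R}(x,\mu)$. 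Writing $\nu_k := \mu_k \circ f(x_k,\cdot)^{-1}$ and $\nu := \mu \circ f(x,\cdot)^{-1}$, the goal becomes $\Theta_\mathcal{R}[\nu_k] \to \Theta_\mathcal{R}[\nu]$, which I would obtain from two ingredients: (a) $\nu_k$ converges to $\nu$ weakly \emph{and} with convergence of $p$-th absolute moments, and (b) $\Theta_\mathcal{R}$ is continuous with respect to exactly this mode of convergence.

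For ingredient (a) the key observation is that $f(x_k,\cdot) \to f(x,\cdot)$ \emph{uniformly} on $\mathbb{R}^s$: by Lemma \ref{LemmaRecourseFunctional} and the definition of $f$, one has $|f(x_k,z) - f(x,z)| \le (\|c\| + L_\varphi\|T\|)\,\|x_k - x\|$ independently of $z$, exactly as in the proof of Lemma \ref{LemmaF}. For weak convergence of $\nu_k$ it suffices to test against bounded Lipschitz $h$; splitting $\int h\circ f(x_k,\cdot)\,d\mu_k - \int h\circ f(x,\cdot)\,d\mu$ into the part coming from replacing $f(x_k,\cdot)$ by $f(x,\cdot)$, controlled uniformly via the Lipschitz estimate above, and the part $\int h\circ f(x,\cdot)\,d\mu_k - \int h\circ f(x,\cdot)\,d\mu$, which vanishes since $z \mapsto h(f(x,z))$ is bounded continuous and $\mu_k \stackrel{w}{\rightarrow}\mu$, yields $\nu_k \stackrel{w}{\rightarrow} \nu$. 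For the moments, the locally uniform $\|\cdot\|^p$-integrating property of $\mathcal{M}$ gives, since the $\mu_k$ eventually lie in a prescribed weak neighbourhood of $\mu$, uniform integrability of $z \mapsto \|z\|^p$ with respect to $\{\mu_k\}$; combined with the growth bound $|f(x_k,z)| \le |f(x_k,0)| + L_\varphi\|z\|$ and the boundedness of $\{f(x_k,0)\}$, this upgrades to uniform integrability of $z \mapsto |f(x_k,z)|^p$. Together with the uniform convergence $f(x_k,\cdot)\to f(x,\cdot)$ and $\mu_k \stackrel{w}{\rightarrow}\mu$, a standard convergence theorem for integrals against weakly convergent measures under uniform integrability then gives $\int |f(x_k,z)|^p\,\mu_k(dz) \to \int |f(x,z)|^p\,\mu(dz)$, i.e. convergence of the $p$-th moments of $\nu_k$ to those of $\nu$.

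Ingredient (b) is where I would lean on the external theory: a finite, law-invariant, convex and nondecreasing functional on $L^p$ induces, via law-invariance, the map $\Theta_\mathcal{R}$ on distributions with finite $p$-th moment, and such a map is continuous with respect to the topology generated by weak convergence together with convergence of $p$-th moments, as established in \cite{ClausKraetschmerSchultz2017} and \cite{KraetschmerSchiedZaehle2017}. Combining (a) and (b) yields $\mathcal{Q}_\mathcal{R}(x_k,\mu_k) = \Theta_\mathcal{R}[\nu_k] \to \Theta_\mathcal{R}[\nu] = \mathcal{Q}_\mathcal{R}(x,\mu)$, proving Part 1. Part 2 is then immediate: for each fixed $x \in X$ the map $\mu \mapsto \mathcal{Q}_\mathcal{R}(x,\mu)$ is weakly continuous, so $\phi = \inf_{x \in X}\mathcal{Q}_\mathcal{R}(x,\cdot)$ is a pointwise infimum of weakly continuous functions and hence weakly upper semicontinuous.

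For Parts 3 and 4 I would invoke Berge's maximum theorem. With $X$ compact the constant feasible-set correspondence $\mu \mapsto X$ is continuous and compact-valued, and $\mathcal{Q}_\mathcal{R}$ is jointly continuous on $X \times \mathcal{M}$ by Part 1; Berge's theorem then delivers weak continuity of $\phi$ and upper semicontinuity in the sense of Berge of $\Phi$, together with nonemptiness and compactness of $\Phi(\mu)$ for every $\mu \in \mathcal{M}$. The main obstacle is ingredient (a) of Part 1, specifically the transfer of the locally uniform $\|\cdot\|^p$-integrability from the $\mu_k$ to the push-forwards $\nu_k$ so as to secure moment convergence and not merely weak convergence: without the moment convergence one cannot feed the sequence into the continuity of $\Theta_\mathcal{R}$, and weak convergence alone is insufficient for a general law-invariant $\mathcal{R}$.
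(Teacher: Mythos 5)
Your proposal is correct, but it follows a different route from the paper: the paper's entire proof is a one-line reduction, in which Lemma~\ref{LemmaRecourseFunctional} (finiteness, convexity and Lipschitz continuity of $\varphi$) verifies the structural hypotheses on the integrand $f$, and then \cite[Corollary 2]{ClausKraetschmerSchultz2017} is cited as a black box that delivers all four assertions at once. What you have done is essentially reconstruct the proof of that corollary: weak convergence plus $p$-th moment convergence of the pushforwards $\mu_k\circ f(x_k,\cdot)^{-1}$ (via the uniform-in-$z$ Lipschitz estimate and the locally uniformly $\|\cdot\|^p$-integrating property), continuity of the induced statistical functional $\Theta_\mathcal{R}$ with respect to this mode of convergence (which you still import from the Kr\"atschmer--Schied--Z\"ahle/Claus--Kr\"atschmer--Schultz circle of results), and Berge's maximum theorem for Parts 3 and 4. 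Your version buys transparency: it makes visible exactly where each hypothesis enters --- local uniform integrability yields uniform integrability of $|f(x_k,\cdot)|^p$ with respect to $\{\mu_k\}$, law-invariance, convexity and finiteness of $\mathcal{R}$ yield the continuity of $\Theta_\mathcal{R}$, and compactness of $X$ is used only in the Berge step --- whereas the paper's citation buys brevity and offloads the delicate measure-theoretic work. Two points deserve explicit care in your write-up: first, since $t\mapsto|t|^p$ is not uniformly continuous for $p>1$, uniform convergence of $f(x_k,\cdot)$ does \emph{not} give uniform convergence of $|f(x_k,\cdot)|^p$, so the moment-convergence step genuinely requires a theorem on integrals of continuously convergent integrands against weakly convergent measures under uniform integrability (a Serfozo-type result), which is what you invoke; second, both the well-definedness of $\Theta_\mathcal{R}$ and the cited continuity of law-invariant risk functionals rely on the underlying probability space being atomless, which is part of the paper's standing setup and should be stated as a hypothesis you are using.
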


\begin{proof}
Invoking Lemma \ref{LemmaRecourseFunctional}, the result follows from \cite[Corollary 2]{ClausKraetschmerSchultz2017}. \qed
\end{proof}

\begin{corollary}
Let $\mathcal{R}: L^p(\Omega, \mathcal{F}, \mathbb{P}) \to \mathbb{R}$ with $p \geq 1$ be law-invariant, convex and nondecreasing and assume A1 and A2. Then $Q_\mathcal{R}$ is continuous.
\end{corollary}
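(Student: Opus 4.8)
The plan is to obtain the corollary as an immediate specialization of Theorem \ref{TheoremStability}, by viewing $Q_\mathcal{R}$ as a one-dimensional section of the stability function $\mathcal{Q}_\mathcal{R}$. Throughout, I write $\mu := \mathbb{P} \circ Z^{-1}$ for the (fixed) distribution of the random parameter; since $\mathcal{R}$ takes real values on $f(x,Z(\cdot))$, the latter lies in $L^p(\Omega,\mathcal{F},\mathbb{P})$, and the linear growth of $\varphi$ guaranteed by Lemma \ref{LemmaRecourseFunctional} then forces $\mu \in \mathcal{M}^p_s$, placing us inside the framework of the preceding theorem.

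The first step is the identification $Q_\mathcal{R}(x) = \mathcal{Q}_\mathcal{R}(x,\mu)$ for every $x \in \mathcal{S}^n_+$. The image-measure formula shows that $f(x,Z(\cdot))$ has distribution $\mu \circ f(x,\cdot)^{-1}$ under $\mathbb{P}$, so law-invariance of $\mathcal{R}$ gives
$$
Q_\mathcal{R}(x) = \mathcal{R}[f(x,Z(\cdot))] = \Theta_\mathcal{R}[\mu \circ f(x,\cdot)^{-1}] = \mathcal{Q}_\mathcal{R}(x,\mu),
$$
so that $Q_\mathcal{R}$ is exactly the section $x \mapsto \mathcal{Q}_\mathcal{R}(x,\mu)$ of the stability function.

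The second step is to exhibit a locally uniformly $\|\cdot\|^p$-integrating set through which the theorem can be invoked. By Example \ref{ExampleLocUnifIntSets}(c), the singleton $\mathcal{M} := \{\mu\} \subseteq \mathcal{M}^p_s$ has exactly this property. As $\mathcal{R}$ is law-invariant, convex and nondecreasing and A1, A2 hold, Theorem \ref{TheoremStability}(1) applies with this choice of $\mathcal{M}$ and yields continuity of the restriction of $\mathcal{Q}_\mathcal{R}$ to $\mathcal{S}^n_+ \times \{\mu\}$ in the product topology. On a one-point second factor this product topology reduces to the standard topology on $\mathcal{S}^n_+$, whence the section $x \mapsto \mathcal{Q}_\mathcal{R}(x,\mu) = Q_\mathcal{R}(x)$ is continuous, as claimed.

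I expect no serious obstacle here: the substance is entirely carried by Theorem \ref{TheoremStability}, and the work reduces to the two bookkeeping points above---confirming the moment condition $\mu \in \mathcal{M}^p_s$ and checking that restricting the joint continuity to a single distribution genuinely specializes to continuity in $x$. The latter is harmless precisely because the relative weak topology on a singleton is trivial, so the only care needed is to verify that all hypotheses of the theorem (law-invariance, convexity, monotonicity, A1, A2, and local uniform $\|\cdot\|^p$-integrability of $\{\mu\}$) are indeed in force.
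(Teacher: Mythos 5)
Your argument is, at its core, exactly the paper's proof: identify $Q_\mathcal{R}(x) = \mathcal{Q}_\mathcal{R}(x,\mathbb{P}\circ Z^{-1})$ via law-invariance, note that the singleton $\lbrace \mathbb{P}\circ Z^{-1} \rbrace$ is locally uniformly $\|\cdot\|^p$-integrating by Example \ref{ExampleLocUnifIntSets}(c), and apply part 1 of Theorem \ref{TheoremStability}; observing that the product topology restricted to $\mathcal{S}^n_+ \times \lbrace \mu \rbrace$ is just the standard topology on $\mathcal{S}^n_+$ is the same (implicit) final step the paper takes.

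The one step you added that the paper does not make---and which is incorrect as stated---is the claim that finiteness of $\mathcal{R}$ on $F(\mathcal{S}^n_+)$ together with the linear growth of $\varphi$ \emph{forces} $\mathbb{P}\circ Z^{-1} \in \mathcal{M}^p_s$. Lemma \ref{LemmaRecourseFunctional} yields only an upper growth bound, $|f(x,z)| \leq |c \bullet x| + |\varphi(-T\bullet x)| + L_\varphi \|z\|$, which gives the implication $\mathbb{P}\circ Z^{-1} \in \mathcal{M}^p_s \Rightarrow f(x,Z(\cdot)) \in L^p(\Omega,\mathcal{F},\mathbb{P})$, not its converse. For the converse one would need a lower bound of the form $|f(x,z)| \geq c_1\|z\| - c_2$, and this can fail under A1 and A2: $\varphi(t) = \max_{u \in M_D} u^\top t$ is the support function of $M_D$, so it vanishes on an entire ray whenever $0 \in \partial M_D$. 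Concretely, for $m=2$, $s=1$, $q = \mathrm{diag}(1,0)$, $W = \mathrm{diag}(1,-1)$ one checks A1, A2, $M_D = [0,1]$ and $\varphi(t) = \max\lbrace t, 0\rbrace$; a heavy-tailed distribution supported on $(-\infty,0]$ with no finite $p$-th moment then makes $f(x,Z(\cdot))$ essentially bounded, hence in $L^p$, while $\mathbb{P}\circ Z^{-1} \notin \mathcal{M}^p_s$. So the moment condition cannot be derived from finiteness of $\mathcal{R}$; it must be read as an implicit standing hypothesis of the corollary (it is needed even to form $\mathcal{Q}_\mathcal{R}(\cdot,\mathbb{P}\circ Z^{-1})$, whose second argument ranges over $\mathcal{M}^p_s$), which is how the paper treats it. With that hypothesis made explicit, your proof is the paper's proof.
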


\begin{proof}
By part (c) of Example \ref{ExampleLocUnifIntSets} we may apply the first part of Theorem \ref{TheoremStability} to $\mathcal{M} = \lbrace \mathbb{P} \circ Z^{-1} \rbrace$. The asserted continuity follows from $Q_\mathcal{R}(x) = \mathcal{Q}_\mathcal{R}(x, \mathbb{P} \circ Z^{-1})$ for any $x \in \mathcal{S}^n_+$. \qed
\end{proof}

We shall now turn our attention to questions of differentiability, but confine the analysis to the risk neutral model.

\begin{lemma}
Assume A1, A2 and $\mathbb{P} \circ Z^{-1} \in \mathcal{M}^1_s$, then the functional $Q_\mathbb{E}: \mathcal{S}^n_+ \to \mathbb{R}$, $Q_\mathbb{E}(x) := \mathbb{E}[F(x)]$ is directionally differentiable and
$$
Q_\mathbb{E}'(x; v) := \int_{\mathbb{R}^s} \varphi'(z-T \bullet x;v)~(\mathbb{P} \circ Z^{-1})(dz)
$$
holds for all $x, v \in \mathcal{S}^n_+$.
\end{lemma}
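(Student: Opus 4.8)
The plan is to differentiate under the integral sign, exploiting the Lipschitz property of $\varphi$ to dominate the difference quotients uniformly. Since $\mathcal{S}^n_+$ is a convex cone, $x + \tau v \in \mathcal{S}^n_+$ for every $x, v \in \mathcal{S}^n_+$ and $\tau \ge 0$, so the one-sided directional derivative is well-posed. Writing $\mu := \mathbb{P} \circ Z^{-1}$ and recalling $f(x,z) = c \bullet x + \varphi(z - T \bullet x)$, a direct computation gives, for every $\tau > 0$,
\[
\frac{Q_\mathbb{E}(x + \tau v) - Q_\mathbb{E}(x)}{\tau} = c \bullet v + \int_{\mathbb{R}^s} g_\tau(z)~\mu(dz), \quad g_\tau(z) := \frac{\varphi(z - T \bullet x - \tau\,T \bullet v) - \varphi(z - T \bullet x)}{\tau}.
\]
Everything then reduces to passing the limit $\tau \downarrow 0$ inside the integral.

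First I would treat the pointwise limit. Fixing $z$, the inner argument is $(z - T \bullet x) - \tau (T \bullet v)$, so by the definition of the directional derivative $g_\tau(z) \to \varphi'(z - T \bullet x; -T \bullet v)$ as $\tau \downarrow 0$; this limit exists because $\varphi$ is finite and convex, hence directionally differentiable, as recorded in the Remark following Lemma \ref{LemmaRecourseFunctional}. Continuity of $\varphi$ (Lemma \ref{LemmaRecourseFunctional}) makes each $g_\tau$ continuous in $z$, and therefore the limiting integrand $z \mapsto \varphi'(z - T \bullet x; -T \bullet v)$, being a pointwise limit of continuous functions, is Borel measurable.

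Next comes the domination, which is the crux of the argument. The Lipschitz estimate of Lemma \ref{LemmaRecourseFunctional} yields the bound $|g_\tau(z)| \le L_\varphi \cdot \|T \bullet v\|$ for all $z \in \mathbb{R}^s$ and all $\tau > 0$. This dominating function is constant, hence $\mu$-integrable because $\mu$ is a probability measure, so dominated convergence applies and gives
\[
Q_\mathbb{E}'(x;v) = c \bullet v + \int_{\mathbb{R}^s} \varphi'(z - T \bullet x; -T \bullet v)~\mu(dz),
\]
which is the asserted representation (the moment condition $\mathbb{P} \circ Z^{-1} \in \mathcal{M}^1_s$ enters only through Lemma \ref{LemmaF} to guarantee that $Q_\mathbb{E}$ is real-valued to begin with).

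The only genuinely delicate point is the interchange of limit and integration, and I do not expect it to be hard: the uniform \emph{constant} Lipschitz bound removes any need for extra moment assumptions on $\mu$. As a robustness check one could instead appeal to monotone convergence, since convexity of $\varphi$ makes $\tau \mapsto g_\tau(z)$ nonincreasing as $\tau \downarrow 0$; either route closes the argument. The secondary point worth verifying carefully is measurability of the limit integrand, which is handled as above.
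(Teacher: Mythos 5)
Your proof is correct, but it takes a genuinely different route from the paper. The paper's proof is essentially a citation argument: it first notes that $Q_\mathbb{E}$ is finite (Lemma \ref{LemmaF}) and convex (Proposition \ref{PropConvexity}), so directional differentiability follows from Rockafellar's Theorem 25.4; it then checks measurability of the integrand and invokes Bertsekas' Proposition 2.1 to justify interchanging directional differentiation with the expectation. You instead prove the interchange from scratch: you write out the difference quotient, use the global Lipschitz constant $L_\varphi$ from Lemma \ref{LemmaRecourseFunctional} to dominate the quotients by the constant $L_\varphi \cdot \|T \bullet v\|$, and conclude by dominated convergence (with the monotonicity of convex difference quotients as a fallback). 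Your argument is self-contained --- it essentially inlines the content of the cited Bertsekas result --- and it delivers existence of the directional derivative and its integral representation in one stroke, without ever needing convexity of $Q_\mathbb{E}$ itself; the paper's version is shorter and leans on standard machinery, which is what that machinery is for. One further point in your favor: your final formula
$$
Q_\mathbb{E}'(x;v) = c \bullet v + \int_{\mathbb{R}^s} \varphi'\bigl(z - T \bullet x;\, -T \bullet v\bigr)~(\mathbb{P} \circ Z^{-1})(dz)
$$
is the dimensionally consistent statement; the display in the lemma as printed omits the first-stage term $c \bullet v$ and writes the direction as $v \in \mathcal{S}^n_+$ rather than $-T \bullet v \in \mathbb{R}^s$, which cannot be literally correct since the second argument of $\varphi'$ must lie in $\mathbb{R}^s$. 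So your derivation also serves to correct a typo in the statement rather than contradict it.
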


\begin{proof}
$Q_\mathbb{E}$ is finite valued by Lemma \ref{LemmaF}, convex by Proposition \ref{PropConvexity} and thus directionally differentiable (cf. \cite[Theorem 25.4]{Rockafellar1970}). Furthermore, $\varphi'(\cdot-Tx;v)$ is a pointwise limit of measurable functions and thus measurable for any $x, v \in \mathcal{S}^n_+$. The asserted representation of the directional derivative is justified by Lemma \ref{LemmaRecourseFunctional} and \cite[Proposition 2.1]{Bertsekas1973}. \qed
\end{proof}

Sufficient conditions for differentiability $Q_\mathbb{E}$ can be obtained using the same arguments as for linear recourse (cf. \cite{ShapiroDentchevaRuszczynski2009}).

\begin{lemma}
Assume A1, A2 and $\mathbb{P} \circ Z^{-1} \in \mathcal{M}^1_s$ and let $x_0 \in \mathcal{S}^n_+$ be such that
$$
\mathrm{Argmax} \lbrace u^\top (z - T \bullet x_0) \; | \;  u \in M_D \rbrace
$$
is a singleton for $(\mathbb{P} \circ Z^{-1})$-almost all $z \in \mathbb{R}^s$. Then $Q_\mathbb{E}$ is differentiable at $x_0$.
\end{lemma}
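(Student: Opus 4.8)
The plan is to exploit that $Q_\mathbb{E}$ is a finite convex function (by Lemma~\ref{LemmaF} and Proposition~\ref{PropConvexity}, extended to all of $\mathcal{S}^n$, which is legitimate since $\varphi$ is finite on all of $\mathbb{R}^s$ by Lemma~\ref{LemmaRecourseFunctional}) and to reduce differentiability at $x_0$ to linearity of the directional derivative $v \mapsto Q_\mathbb{E}'(x_0;v)$. Indeed, by \cite[Theorem 25.1]{Rockafellar1970} a finite convex function is differentiable at a point precisely when its subdifferential there is a singleton; since $v \mapsto Q_\mathbb{E}'(x_0;v)$ is the support function of the nonempty compact convex set $\partial Q_\mathbb{E}(x_0)$, that set is a singleton if and only if the support function is linear. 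Hence it suffices to show that $Q_\mathbb{E}'(x_0;\cdot)$ is linear.

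First I would translate the hypothesis into a statement about $\varphi$. By the Remark following Lemma~\ref{LemmaRecourseFunctional} we have $\partial\varphi(t) = \mathrm{Argmax}\{u^\top t \mid u \in M_D\}$, so the assumption that $\mathrm{Argmax}\{u^\top(z - T\bullet x_0)\mid u\in M_D\}$ is a singleton for $(\mathbb{P}\circ Z^{-1})$-almost all $z$ says precisely that $\varphi$ is differentiable at $z - T\bullet x_0$ for almost all $z$. For each such $z$ the directional derivative $w \mapsto \varphi'(z - T\bullet x_0; w) = \nabla\varphi(z - T\bullet x_0)^\top w$ is linear in $w$, and consequently $v \mapsto \varphi'(z - T\bullet x_0; -T\bullet v)$ is linear in $v$.

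Finally I would insert this into the directional-derivative formula of the preceding lemma, which expresses $Q_\mathbb{E}'(x_0;v)$ as $c\bullet v$ plus the integral of $\varphi'(z - T\bullet x_0; -T\bullet v)$ against $\mathbb{P}\circ Z^{-1}$. Using the almost-everywhere linearity of the integrand from the previous step,
\begin{equation*}
Q_\mathbb{E}'(x_0;v) = c\bullet v + \Big(\int_{\mathbb{R}^s} \nabla\varphi(z - T\bullet x_0)~(\mathbb{P}\circ Z^{-1})(dz)\Big)^{\!\top}(-T\bullet v),
\end{equation*}
where the integral is well defined because $\nabla\varphi(t)\in M_D$ and hence $\|\nabla\varphi(t)\| \le L_\varphi$ for all $t$, so the integrand is bounded and integrable. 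The right-hand side is linear in $v$, which is exactly what the reduction required.

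The step I expect to be most delicate is the interchange of differentiation and integration underlying the directional-derivative formula; this is precisely the content of the preceding lemma (via \cite[Proposition 2.1]{Bertsekas1973}), so I would simply invoke it, taking care only that the direction $-T\bullet v$ (rather than $v$ itself) is used consistently through the chain rule for $f(\cdot,z)=c\bullet(\cdot)+\varphi(z-T\bullet(\cdot))$. The two remaining points are routine but essential: that almost-everywhere linearity of the integrand together with the uniform bound $\|\nabla\varphi\|\le L_\varphi$ permits pulling the gradient out of the integral (equivalently, that the Aumann integral of almost-everywhere singletons is again a singleton), and that for a finite convex function the linearity of the directional derivative obtained this way yields genuine differentiability, which is once more \cite[Theorem 25.1]{Rockafellar1970}.
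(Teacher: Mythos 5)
Your proof is correct, but it takes a genuinely different route from the paper's. The paper argues directly from the definition of (Fr\'echet) differentiability: under the hypothesis, $h_z(x)=c\bullet x+\varphi(z-T\bullet x)$ is differentiable at $x_0$ for $(\mathbb{P}\circ Z^{-1})$-almost all $z$; the normalized first-order remainders $g_z(x)$ are bounded by $2(L_\varphi\|T\|+\|c\|)$ uniformly in $z$ and $x$ thanks to Lemma~\ref{LemmaRecourseFunctional}; and Lebesgue's dominated convergence theorem then gives $\int_{\mathbb{R}^s} g_z(x)~(\mathbb{P}\circ Z^{-1})(dz)\to 0$ as $x\to x_0$, i.e.\ differentiability with $Q_\mathbb{E}'(x_0)=\int_{\mathbb{R}^s} h_z'(x_0)~(\mathbb{P}\circ Z^{-1})(dz)$. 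You instead stay inside convex analysis: you reuse the directional-derivative formula of the preceding lemma, observe that for almost every $z$ the integrand $v\mapsto\varphi'(z-T\bullet x_0;-T\bullet v)$ is linear, pull the gradient out of the integral (legitimate since $\nabla\varphi(t)\in M_D$ gives the uniform bound $L_\varphi$), and conclude via the support-function characterization of $\partial Q_\mathbb{E}(x_0)$ and \cite[Theorem 25.1]{Rockafellar1970}. Both routes yield the same gradient $c-T^\top\int_{\mathbb{R}^s}\nabla\varphi(z-T\bullet x_0)~(\mathbb{P}\circ Z^{-1})(dz)$. The paper's argument is self-contained and never needs the directional-derivative lemma; yours is shorter, reuses established material, and has the side benefit of correcting that lemma's statement (the missing term $c\bullet v$ and the chain-rule direction $-T\bullet v$).

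One step is genuinely delicate, and it is rescued only by your opening parenthetical. The equivalence ``$\partial Q_\mathbb{E}(x_0)$ is a singleton $\Leftrightarrow$ the directional derivative is linear'' requires linearity of $v\mapsto Q_\mathbb{E}'(x_0;v)$ on the \emph{whole} space $\mathcal{S}^n$ of symmetric matrices, whereas the preceding lemma is stated only for directions $v\in\mathcal{S}^n_+$. Linearity on a cone alone would not suffice: the support function of the segment joining the origin to $(-1,0)$ in $\mathbb{R}^2$ vanishes, hence is linear, on the open positive orthant, yet the segment is not a singleton. Because $\varphi$ is finite everywhere, $Q_\mathbb{E}$ does extend to a finite convex function on all of $\mathcal{S}^n$, and the argument behind the directional-derivative formula (via \cite[Proposition 2.1]{Bertsekas1973}) goes through verbatim for arbitrary $x,v\in\mathcal{S}^n$; so your proof stands, but you should state explicitly that you apply the formula for all $v\in\mathcal{S}^n$, not merely $v\in\mathcal{S}^n_+$.
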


\begin{proof}
For $(\mathbb{P} \circ Z^{-1})$-almost all $z \in \mathbb{R}^s$, $h_z: \mathcal{S}^n_+ \to \mathbb{R}$, $h_z(x) = c \bullet x + \varphi(z - T \bullet x)$ is differentiable with measurable derivative 
$$
h_z'(x) = c + - T^\top \cdot \mathrm{Argmax} \lbrace u^\top (z - T \bullet x_0) \; | \;  u \in M_D \rbrace.
$$
Consider the functions $g_z: \mathcal{S}^n_+ \to \mathbb{R}$ defined by
$$
g_z(x) := \frac{h_z(x) - h_z(x_0) - h_z'(x_0)^\top(x-x_0)}{\|x-x_0\|},
$$
then $\lim_{x \to x_0} g_z(x) = 0$ holds for $(\mathbb{P} \circ Z^{-1})$-almost all $z \in \mathbb{R}^s$. Furthermore, Lemma \ref{LemmaRecourseFunctional} implies $\|g_z(x)\| \leq 2(L_\varphi\|T\| + \|c\|)$ for all $x \in \mathcal{S}^n_+$ and $z \in \mathbb{R}^s$. Hence, by Lebesgue's dominated convergence theorem, we have
\begin{align*}
&\lim_{x \to x_0} \frac{Q_\mathbb{E}(x) - Q_\mathbb{E}(x_0) - \int_{\mathbb{R}^s} h'_z(x_0)^\top(x-x_0)~(\mathbb{P} \circ Z^{-1})(dz)}{\|x-x_0\|} \\
= &\lim_{x \to x_0} \int_{\mathbb{R}^s} g_z(x)~(\mathbb{P} \circ Z^{-1})(dz) = \int_{\mathbb{R}^s} \lim_{x \to x_0} g_z(x)~(\mathbb{P} \circ Z^{-1})(dz) = 0.
\end{align*}
Consequently, $Q_\mathbb{E}$ is differentiable at $x_0$ and $Q_\mathbb{E}'(x_0) = \int_{\mathbb{R}^s} h_z'(x_0)~(\mathbb{P} \circ Z^{-1})(dz)$. \qed
\end{proof}

\begin{corollary}
Assume A1, A2 and that $\mathbb{P} \circ Z^{-1} \in \mathcal{M}^1_s$ is absolutely continuous with respect to the Lebesgue measure. Then $Q_\mathbb{E}$ is continuously differentiable on $\mathcal{S}^n_+$.
\end{corollary}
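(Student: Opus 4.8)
The plan is to exploit convexity together with the characterization of differentiability of $\varphi$ from the Remark, so that $C^1$-regularity becomes a statement about a convex function on an open domain. First I would note that nothing in the definition of $Q_\mathbb{E}$ actually requires $x$ to be positive semidefinite: since $\varphi$ is finite and Lipschitz on all of $\mathbb{R}^s$ by Lemma \ref{LemmaRecourseFunctional}, the estimate in the proof of Lemma \ref{LemmaF} bounds $\|F(x)\|_{L^1}$ using only $|c\bullet x|$, $|\varphi(0)|$, $L_\varphi\|T\bullet x\|$ and the first moment of $\mathbb{P}\circ Z^{-1}$, all of which are finite for every symmetric $x$. Hence $Q_\mathbb{E}$ extends to a finite function on the whole vector space $\mathcal{S}^n$ of symmetric matrices, and the pointwise convexity of $f(\cdot,z)$ used in Proposition \ref{PropConvexity} makes this extension convex on the open set $\mathcal{S}^n$. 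This reduction lets me argue continuous differentiability on the open domain $\mathcal{S}^n$ and then simply restrict to $\mathcal{S}^n_+$.

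Next I would establish differentiability at every point. Fix $x_0\in\mathcal{S}^n$. A finite convex function on $\mathbb{R}^s$ is differentiable off a set $N$ of Lebesgue measure zero by \cite[Theorem 25.5]{Rockafellar1970}, so the Remark shows that $\mathrm{Argmax}\lbrace u^\top t \mid u\in M_D\rbrace$ is a singleton for every $t\notin N$. The exceptional set of right-hand sides associated with $x_0$ is $\lbrace z \mid z - T\bullet x_0 \in N\rbrace = N + T\bullet x_0$, a translate of $N$ and therefore again a Lebesgue null set. Because $\mathbb{P}\circ Z^{-1}$ is absolutely continuous with respect to Lebesgue measure, this translate has $(\mathbb{P}\circ Z^{-1})$-measure zero, so the singleton condition of the preceding lemma is satisfied for $(\mathbb{P}\circ Z^{-1})$-almost all $z$. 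That lemma then yields differentiability of $Q_\mathbb{E}$ at $x_0$ with gradient $\nabla Q_\mathbb{E}(x_0) = c - T^\top \int_{\mathbb{R}^s} \nabla\varphi(z - T\bullet x_0)\,(\mathbb{P}\circ Z^{-1})(dz)$. As $x_0\in\mathcal{S}^n$ was arbitrary, $Q_\mathbb{E}$ is differentiable throughout the open set $\mathcal{S}^n$.

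Finally, continuity of the gradient comes essentially for free: a convex function that is differentiable throughout an open convex set is automatically continuously differentiable there, by \cite[Corollary 25.5.1]{Rockafellar1970}. Applying this to the convex function $Q_\mathbb{E}$ on $\mathcal{S}^n$ and restricting to $\mathcal{S}^n_+$ gives the assertion.

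The only genuinely delicate point is the interplay in the differentiability step. The set $N$ where $\varphi$ fails to be differentiable is Lebesgue null but may be dense, so one cannot argue pointwise in $z$ for fixed $x_0$; it is precisely the absolute continuity of $\mathbb{P}\circ Z^{-1}$ that converts ``Lebesgue-almost every $t$'' into ``$(\mathbb{P}\circ Z^{-1})$-almost every $z$'', which is why that hypothesis is indispensable. If one prefers to avoid invoking \cite[Corollary 25.5.1]{Rockafellar1970}, the continuity of $\nabla Q_\mathbb{E}$ can be shown by hand: for $x_k\to x_0$ choose measurable selections $u_k(z)\in\partial\varphi(z - T\bullet x_k)\subseteq M_D$, so that $\nabla Q_\mathbb{E}(x_k) = c - T^\top\int u_k(z)\,(\mathbb{P}\circ Z^{-1})(dz)$; the outer semicontinuity of the subdifferential together with the almost-everywhere singleton property forces $u_k(z)\to\nabla\varphi(z - T\bullet x_0)$ for $(\mathbb{P}\circ Z^{-1})$-almost all $z$, and the uniform bound $\|u_k(z)\|\leq L_\varphi$ allows one to pass to the limit under the integral by dominated convergence.
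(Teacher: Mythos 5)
Your proposal is correct and follows essentially the same route as the paper: nondifferentiability points of $\varphi$ form a Lebesgue null set by \cite[Theorem 25.5]{Rockafellar1970}, its translate $N_{x_0}$ is again null, absolute continuity of $\mathbb{P} \circ Z^{-1}$ makes it a $(\mathbb{P} \circ Z^{-1})$-null set so the preceding lemma gives differentiability, and convexity plus \cite[Theorem 25.5]{Rockafellar1970} (or its Corollary 25.5.1) upgrades this to continuous differentiability. Your preliminary extension of $Q_\mathbb{E}$ to the open set $\mathcal{S}^n$ is a worthwhile refinement the paper omits, since $\mathcal{S}^n_+$ is not open and the cited convex-analysis results formally require an open domain.
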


\begin{proof}
Let $N_\varphi \subset \mathbb{R}^s$ denote the set of points of nondifferentiability of $\varphi$. By \cite[Theorem 25.5]{Rockafellar1970},
$$
N_x := \lbrace z \in \mathbb{R}^s \; | \; z - T \bullet x \in N_\varphi \rbrace
$$
is a null set with respect to the Lebesgue measure for any $x \in \mathcal{S}^n_+$, which implies $(\mathbb{P} \circ Z^{-1})[N_x] = 0$. Consequently, $Q_\mathbb{E}$ is differentiable on $\mathcal{S}^n_+$. Continuity of the derivative follows from \cite[Theorem 25.5]{Rockafellar1970} and the convexity of $Q_\mathbb{E}$. \qed
\end{proof}

\begin{remark}
Assuming A1, A2 and $\mathbb{P} \circ Z^{-1} \in \mathcal{M}^1_s$, the subdifferential of $Q_\mathbb{E}$ admits the representation
\begin{align*}
&\partial Q_\mathbb{E}(x) = c + \int_\mathbb{R}^s \partial_x \varphi(z-T \bullet x)~(\mathbb{P} \circ Z^{-1})(dz) \\
&= \Big\{ c + \int_{\mathbb{R}^s} \rho(z)~(\mathbb{P} \circ Z^{-1})(dz) \; | \; \rho: \mathbb{R}^s \to \mathcal{S}^n_+ \; \text{measurable}, \; \rho(z) \in \partial_x \varphi(z-T \bullet x) \; \text{a.s.} \Big\}.
\end{align*}
Furhter details are given in \cite{Bertsekas1973}.
\end{remark}

\begin{corollary}
Assume A2 and that the underlying random variable $Z$ follows a finite discrete distribution with realizations $z_1, \ldots, z_S \in \mathbb{R}^s$ and respective probabilities $\pi_1, \ldots, \pi_S > 0$. Furthermore, assume that $\lbrace y \in \mathcal{S}^m_+ \; | \; W \bullet y = z_i - T \bullet x \rbrace$ is nonempty for any $i \in \lbrace 1, \ldots, S \rbrace$ and $x \in \mathcal{S}^n_+$. Then
\begin{align*}
\partial Q_\mathbb{E}(x) &= c + \sum_{i=1}^{s} \pi_i \cdot \partial_x \varphi(z_i - T \bullet x) \\
&= c + \sum_{i=1}^{s} -\pi_i \cdot T^\top \cdot \mathrm{Argmax} \lbrace u^\top (z_i - T \bullet x) \; | \;  u \in M_D \rbrace
\end{align*}
holds for any $x \in \mathcal{S}^n_+$.
\end{corollary}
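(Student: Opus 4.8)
The plan is to specialize the integral representation of $\partial Q_\mathbb{E}$ recorded in the preceding Remark to the finite discrete case, and to show that the role played there by complete recourse can be taken over, at the finitely many support points, by the nonemptiness hypothesis. Since $Z$ assumes only the values $z_1,\dots,z_S$ with probabilities summing to one, I would first write
$$
Q_\mathbb{E}(x)=\mathbb{E}[f(x,Z(\cdot))]=\sum_{i=1}^{S}\pi_i f(x,z_i)=c\bullet x+\sum_{i=1}^{S}\pi_i\,\varphi(z_i-T\bullet x),
$$
so that the integral collapses to a finite sum and the task reduces to subdifferentiating this sum at a fixed $x\in\mathcal{S}^n_+$.

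Next I would recover, at each argument $t_i:=z_i-T\bullet x$, the two facts that the earlier Remark obtained globally from A1 and A2, namely finiteness of $\varphi(t_i)$ together with the identity $\partial\varphi(t_i)=\mathrm{Argmax}\lbrace u^\top t_i\mid u\in M_D\rbrace$. The hypothesis that $\lbrace y\in\mathcal{S}^m_+\mid W\bullet y=t_i\rbrace$ is nonempty yields primal feasibility of the SDP defining $\varphi(t_i)$, while A2 supplies a strictly feasible dual point; conic strong duality then gives $\varphi(t_i)=\max\lbrace u^\top t_i\mid u\in M_D\rbrace$ with the maximum finite and attained. Interpreting the optimal dual solutions as subgradients of the optimal value function with respect to its right-hand side, this delivers $\partial\varphi(t_i)=\mathrm{Argmax}\lbrace u^\top t_i\mid u\in M_D\rbrace$ without any appeal to A1 or to the compactness of $M_D$ from Lemma \ref{LemmaMDKompakt}.

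With these facts in hand I would apply the Moreau--Rockafellar sum rule to $Q_\mathbb{E}=c\bullet(\cdot)+\sum_{i=1}^{S}\pi_i(\varphi\circ A_i)$, where $A_i(x):=z_i-T\bullet x$, to obtain $\partial Q_\mathbb{E}(x)=c+\sum_{i=1}^{S}\pi_i\,\partial_x\varphi(z_i-T\bullet x)$, which is the first asserted equality. The second then follows from the chain rule for a convex function precomposed with an affine map: the adjoint of the linear map $x\mapsto T\bullet x=(\mathrm{tr}(T_1x),\dots,\mathrm{tr}(T_sx))^\top$ is $u\mapsto T^\top u=\sum_{j=1}^{s}u_jT_j$, and the linear part of $A_i$ is $-T\bullet$, so that
$$
\partial_x\varphi(z_i-T\bullet x)=-T^\top\partial\varphi(z_i-T\bullet x)=-T^\top\,\mathrm{Argmax}\lbrace u^\top(z_i-T\bullet x)\mid u\in M_D\rbrace,
$$
and substitution produces both displayed formulas.

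I expect the main obstacle to be the verification of the constraint qualifications underlying the sum and chain rules, since under the present hypotheses $\mathrm{dom}\,\varphi=W\bullet\mathcal{S}^m_+$ is merely a convex cone in $\mathbb{R}^s$ rather than all of $\mathbb{R}^s$ as it would be under A1. The sum rule is the easier half: because the nonemptiness hypothesis is assumed for every $x\in\mathcal{S}^n_+$, each summand $\varphi\circ A_i$ is finite on $\mathcal{S}^n_+$, hence $\mathrm{int}\,\mathcal{S}^n_+$ lies in the relative interior of every domain and the standard qualification holds. The genuinely delicate point is the chain rule at a boundary matrix $x\in\partial\mathcal{S}^n_+$, where finiteness of $\varphi\circ A_i$ on a full neighborhood is no longer automatic; here I would have to extract from the nonemptiness assumption the condition that the affine range $A_i(\mathcal{S}^n_+)$ meets the relative interior of $\mathrm{dom}\,\varphi$, so that the reverse inclusion $\partial_x\varphi(z_i-T\bullet x)\subseteq-T^\top\partial\varphi(z_i-T\bullet x)$ is secured, the opposite inclusion being automatic. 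At interior matrices $x\in\mathrm{int}\,\mathcal{S}^n_+$ the composition is finite on a neighborhood and the chain rule is immediate, so it is only this boundary analysis that requires care.
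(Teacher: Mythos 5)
Your route is essentially the paper's own: the paper's entire proof is a citation of Rockafellar's Theorem~23.8 (the Moreau--Rockafellar sum rule), tacitly combined with the affine chain rule of Theorem~23.9, and your handling of the sum rule and of the Fenchel-duality identity $\partial \varphi(t)=\mathrm{Argmax}\,\lbrace u^\top t \mid u \in M_D\rbrace$ (valid with both sides possibly empty) is correct. The genuine gap is exactly the point you flag and leave open: the chain-rule qualification does not follow from the nonemptiness hypothesis. Moreover, your fallback claim that at $x \in \mathrm{int}\,\mathcal{S}^n_+$ the chain rule is ``immediate'' is false: finiteness of $\varphi \circ A_i$ near $x$ only yields $\partial(\varphi \circ A_i)(x) \neq \emptyset$; the equality $\partial(\varphi\circ A_i)(x) = -T^\top \partial\varphi(A_i x)$ is governed by the global condition that the range of $A_i$ meet $\mathrm{ri}(\mathrm{dom}\,\varphi)$, which has nothing to do with $x$ being an interior point --- indeed $\partial \varphi(A_i x)$ can be empty while $\partial(\varphi\circ A_i)(x)$ is not.

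In fact the gap cannot be closed, because the corollary is false as stated. Take $n=1$, $m=2$, $s=2$, $S=1$, $\pi_1=1$, $z_1=(0,0)^\top$, $c=1$,
$$
q=\begin{bmatrix}0&-\tfrac12\\-\tfrac12&0\end{bmatrix},\qquad W_1=\begin{bmatrix}1&0\\0&0\end{bmatrix},\qquad W_2=\begin{bmatrix}0&0\\0&1\end{bmatrix},\qquad T=(-1,0),
$$
the data being symmetric but not positive semidefinite, in line with the paper's own Example~1 (A1 is outright impossible when all $W_j \in \mathcal{S}^m_+$). Then $q\bullet y=-y_{12}$ and $W\bullet y=(y_{11},y_{22})^\top$, so $\mathrm{dom}\,\varphi=\mathbb{R}^2_+$ and $\varphi(t)=-\sqrt{t_1t_2}$ there; A2 holds at $u=(-1,-1)$, and $M_D=\lbrace u \mid u_1\le 0,\ u_2\le 0,\ u_1u_2\ge\tfrac14\rbrace$, so every $u\in M_D$ has $u_1<0$. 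Since $z_1-T\bullet x=(x,0)^\top$ and $y=\mathrm{diag}(x,0)$ is recourse-feasible for every $x\in\mathcal{S}^1_+=\mathbb{R}_+$, all hypotheses of the corollary are met. However, $Q_\mathbb{E}(x)=x+\varphi(x,0)=x$ for $x\ge 0$, so $\partial Q_\mathbb{E}(x)=\lbrace 1\rbrace$ at every $x>0$, while $\mathrm{Argmax}\,\lbrace u^\top(x,0)^\top \mid u\in M_D\rbrace=\mathrm{Argmax}\,\lbrace u_1x \mid u\in M_D\rbrace=\emptyset$ (the supremum $0$ is not attained) and likewise $\partial\varphi\bigl((x,0)^\top\bigr)=\emptyset$, so the asserted right-hand side is empty; the failure occurs precisely in the chain-rule step, at interior points. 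The statement becomes true only under an additional qualification --- e.g.\ retaining A1, so that $\mathrm{dom}\,\varphi=\mathbb{R}^s$ and Theorems~23.8/23.9 apply verbatim (this is evidently what the paper's one-sentence proof presupposes), or requiring $z_i-T\bullet x\in\mathrm{ri}(W\bullet\mathcal{S}^m_+)$, or assuming dual attainment. Credit to you for isolating the precise obstruction, but as written the proposal cannot be completed, and the same criticism applies to the paper's own proof.
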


\begin{proof}
The result follows directly from \cite[Theorem 23.8]{Rockafellar1970}. \qed
\end{proof}

\section{Extensive Formulations for Finite Discrete Distributions}
Throughout this section, we shall assume A1, A2 and that the underlying random variable $Z$ follows a finite discrete distribution with realizations $z_1, \ldots, z_S \in \mathbb{R}^s$ and respective probabilities $\pi_1, \ldots, \pi_S > 0$. Furthermore, we denote the index set $\lbrace 1, \ldots, S \rbrace$ by $\mathcal{I}_S$.

\smallskip

It is well known that in the risk neutral setting, the stochastic SDP admits a reformulation as a block-structured SDP (cf. \cite{AriyawansaZhu2006}, \cite{MehrotraOezevin2007}):

\begin{proposition} \label{PropRiskNeutralReformulation}
The risk neutral stochastic SDP
\begin{equation}
\label{RiskNeutralSSDP}
\min \left\{ Q_\mathbb{E}(x) \; | \; x \in X \right\}
\end{equation}
is equivalent to the SDP
\begin{align}
\label{RiskNeutralSSDPReformulation}
\min_{x, y_1, \ldots, y_S} \bigg\{ c \bullet x + \sum_{i = 1}^{S} \pi_i q \bullet y_i \; | \; T \bullet x + W \bullet y_i = z_i \; \forall i \in \mathcal{I}_S&,
\\ x \in X, \; y_i \in \mathcal{S}^m_+ \; \forall i \in \mathcal{I}_S& \bigg\} \nonumber,
\end{align}
in the sense that the infimal values of the problems coincide. Furthermore, $x$ is an optimal solution for \eqref{RiskNeutralSSDP} if and only if there exist $v$ and $y_1, \ldots, y_S$ such that $(x,v,y_1, \ldots, y_S)$ is an optimal solution for \eqref{RiskNeutralSSDPReformulation}.
\end{proposition}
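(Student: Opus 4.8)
The plan is to exploit that, for a finite discrete distribution, $Q_\mathbb{E}$ is an explicit finite sum whose summands each contain a recourse minimization, and that passing to \eqref{RiskNeutralSSDPReformulation} merely ``pulls out'' these inner minimizations into joint second-stage variables $y_1, \ldots, y_S$. (The reformulation \eqref{RiskNeutralSSDPReformulation} carries no auxiliary variable $v$; the relevant tuple is $(x, y_1, \ldots, y_S)$.) Concretely, the equality of infimal values will reduce to the elementary identity $\inf_x \inf_y = \inf_{(x,y)}$ together with the separability of the recourse constraints across scenarios, while the solution correspondence will follow by comparing objective values once attainment of the recourse minima is secured.

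First I would rewrite the objective. Since $\sum_{i=1}^S \pi_i = 1$, we have
$$Q_\mathbb{E}(x) = \sum_{i=1}^S \pi_i\, f(x, z_i) = c \bullet x + \sum_{i=1}^S \pi_i\, \varphi(z_i - T \bullet x),$$
and by the definition of $\varphi$ each term reads $\varphi(z_i - T \bullet x) = \inf_{y_i}\{ q \bullet y_i \mid T \bullet x + W \bullet y_i = z_i,\ y_i \in \mathcal{S}^m_+ \}$, which is finite under assumptions A1 and A2 by Lemma \ref{LemmaRecourseFunctional}. Because the $i$-th constraint couples only $y_i$ to $x$, the sum of these infima separates into a single joint infimum, giving
$$Q_\mathbb{E}(x) = c \bullet x + \inf_{y_1, \ldots, y_S}\Big\{ \sum_{i=1}^S \pi_i\, q \bullet y_i \;\Big|\; T \bullet x + W \bullet y_i = z_i\ \forall i \in \mathcal{I}_S,\ y_i \in \mathcal{S}^m_+\ \forall i \in \mathcal{I}_S \Big\}.$$
Taking the infimum over $x \in X$ and merging the outer infimum over $x$ with the inner one over $(y_1, \ldots, y_S)$ into a joint infimum yields exactly the infimal value of \eqref{RiskNeutralSSDPReformulation}, so the two optimal values coincide.

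For the solution correspondence I would argue in both directions; write $v^\star$ for the common optimal value established above. The easy direction is ``$\Leftarrow$'': if $(x, y_1, \ldots, y_S)$ is optimal for \eqref{RiskNeutralSSDPReformulation}, then $x \in X$ and each $y_i$ is feasible for the recourse problem at $x$, so $q \bullet y_i \geq \varphi(z_i - T \bullet x)$; the chain
$$Q_\mathbb{E}(x) = c \bullet x + \sum_{i=1}^S \pi_i \varphi(z_i - T \bullet x) \leq c \bullet x + \sum_{i=1}^S \pi_i\, q \bullet y_i = v^\star \leq Q_\mathbb{E}(x)$$
then forces $Q_\mathbb{E}(x) = v^\star$, i.e. optimality of $x$ for \eqref{RiskNeutralSSDP}, using $v^\star = \inf_{x' \in X} Q_\mathbb{E}(x')$. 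This direction needs no attainment. For ``$\Rightarrow$'', given $x$ optimal for \eqref{RiskNeutralSSDP}, I would pick $y_i$ attaining $\varphi(z_i - T \bullet x)$ for each $i$; then $(x, y_1, \ldots, y_S)$ is feasible for \eqref{RiskNeutralSSDPReformulation} with objective value $Q_\mathbb{E}(x) = v^\star$, hence optimal.

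The main obstacle is precisely this last step: the \emph{existence} of minimizers $y_i$ of the recourse problem. The Example preceding this section shows that under A1 and $M_D \neq \emptyset$ alone the recourse infimum need not be attained, so the nontrivial point is that assumption A2 restores attainment. I would justify this via conic (SDP) strong duality: A2 furnishes a strictly feasible point of the dual problem $\max\{t^\top u \mid u \in M_D\}$, whose value $\varphi(t)$ is finite by Lemma \ref{LemmaRecourseFunctional}; by the conic duality theorem, strict feasibility of the dual together with finiteness of its value implies that the primal recourse problem is solvable, i.e. attains its minimum, for every right-hand side $t = z_i - T \bullet x$. Feasibility of that primal for all such $t$ is guaranteed by the complete-recourse assumption A1, which closes the argument.
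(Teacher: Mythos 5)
Your proof is correct and takes essentially the same route as the paper's: both reduce the claim to the pointwise inequality $Q_\mathbb{E}(x) \leq c \bullet x + \sum_{i=1}^{S} \pi_i\, q \bullet y_i$ for feasible $(x,y_1,\ldots,y_S)$ (your separability and inf-merging argument is just a more explicit rendering of this), and both hinge on the same key point, namely attainment of the recourse minima via strong conic duality under A1 and A2. The only difference is expository: the paper compresses the attainment step into the remark that the Argmin sets are nonempty ``by strong duality,'' whereas you spell out the conic duality theorem (strict dual feasibility from A2 plus finiteness of the dual value) and both directions of the solution correspondence, and you correctly note that the variable $v$ in the statement is vestigial.
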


\begin{proof}
By definition of $\varphi$,
\begin{equation}
\label{ProofEquivExp}
Q_\mathbb{E}(x) = c \bullet x + \sum_{i = 1}^{S} \pi_i \varphi(z_i - T \bullet x) \leq c \bullet x + \sum_{i = 1}^{S} \pi_i q \bullet y_i
\end{equation}
holds for any $x \in X$, $y_1, \ldots, y_S \in \mathcal{S}^m_+$ satisfying $T \bullet x + W \bullet y_i = z_i$ for all $i \in \mathcal{I}_S$. Thus, the infimal value of \eqref{RiskNeutralSSDP} is less or equal to the infimal value of \eqref{RiskNeutralSSDPReformulation}. Furhtermore, \eqref{ProofEquivExp} is satisfied as equality if and only if
$$
y_i \in \mathrm{Argmin} \lbrace q \bullet y \; | \; T \bullet x + W \bullet y = z_i, \; y \in \mathcal{S}^m_+ \rbrace
$$
holds for all $i \in \mathcal{I}_S$. The optimal solution set above is nonempty by strong duality, which holds due to A1 and A2. \qed
\end{proof}

We continue with extensive formulations of the SDP \eqref{OptimizationWithRiskMeasures} for mean-risk models based on the risk measures immediately following Definition \ref{LawInvariance}. In this context, $\rho$ shall always be a nonnegative, predefined parameter indicating risk-aversion in the optimization.

\begin{proposition}\label{ExtensiveMeanRiskEE}
\begin{equation}
\label{ExpExcessSSDP}
\min \left\{Q_{\mathbb{E} + \rho \, \mathbb{EE}_{\eta}}(x) \; | \; x \in X \right\},
\end{equation}
with $\eta \in \mathbb{R}$ as a given parameter, can be equivalently restated as
\begin{align}
\label{ExpEscessSSDPReformulation}
\min_{\substack{x, v_1, \ldots, v_S, \\ y_1, \ldots, y_S}} \bigg\{ c \bullet x + \sum_{i = 1}^{S} \pi_i q \bullet y_i + \rho \, \sum_{i = 1}^{S} \pi_i v_i \; | \; T \bullet x + W \bullet y_i = z_i \; \forall i \in \mathcal{I}_S&, \\ \nonumber
v \geq 0, \; v_i \geq c \bullet x + q \bullet y_i - \eta \; \forall i \in \mathcal{I}_S&, \\
x \in X, \; y_i \in \mathcal{S}^m_+ \; \forall i \in \mathcal{I}_S& \bigg\}. \nonumber
\end{align}
\end{proposition}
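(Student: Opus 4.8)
The plan is to follow the template of Proposition \ref{PropRiskNeutralReformulation}: first write the risk functional explicitly under the finite discrete distribution, then eliminate the auxiliary variables of the extensive formulation block by block. Since $\mathbb{P} \circ Z^{-1}$ is discrete with atoms $z_1, \ldots, z_S$ of probability $\pi_1, \ldots, \pi_S$ summing to one, and $f(x,z) = c \bullet x + \varphi(z - T \bullet x)$, the definition of $\mathbb{EE}_\eta$ gives
$$
Q_{\mathbb{E} + \rho\,\mathbb{EE}_\eta}(x) = c \bullet x + \sum_{i \in \mathcal{I}_S} \pi_i \varphi(z_i - T \bullet x) + \rho \sum_{i \in \mathcal{I}_S} \pi_i \max\{0,\, c \bullet x + \varphi(z_i - T \bullet x) - \eta\}
$$
for every $x \in X$. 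This is the quantity I must match with the optimal value of \eqref{ExpEscessSSDPReformulation}.

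First I would show that the optimal value of \eqref{ExpExcessSSDP} is at most that of \eqref{ExpEscessSSDPReformulation}. Fix any point $(x, v_1, \ldots, v_S, y_1, \ldots, y_S)$ feasible for \eqref{ExpEscessSSDPReformulation}. The constraint $W \bullet y_i = z_i - T \bullet x$ with $y_i \in \mathcal{S}^m_+$ and the definition of $\varphi$ yield $q \bullet y_i \geq \varphi(z_i - T \bullet x)$, while the constraints $v_i \geq 0$ and $v_i \geq c \bullet x + q \bullet y_i - \eta$ give $v_i \geq \max\{0,\, c \bullet x + q \bullet y_i - \eta\}$. The decisive observation is that, because $\rho \geq 0$ and $t \mapsto \max\{0,t\}$ is nondecreasing, both $\pi_i q \bullet y_i$ and $\rho \pi_i \max\{0,\, c \bullet x + q \bullet y_i - \eta\}$ are nondecreasing in the scalar $q \bullet y_i$; replacing $q \bullet y_i$ by its lower bound $\varphi(z_i - T \bullet x)$ therefore cannot increase the objective. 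Substituting both bounds shows that the objective of \eqref{ExpEscessSSDPReformulation} at $(x,v,y)$ dominates $Q_{\mathbb{E}+\rho\,\mathbb{EE}_\eta}(x)$, and passing to the infimum over all feasible points gives the desired inequality.

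For the reverse inequality I would exhibit, for each $x \in X$, a feasible completion attaining equality. Under A1 the recourse problem is feasible for every right-hand side, and under A1 and A2 strong duality holds together with attainment of the recourse minimum, exactly as invoked in Proposition \ref{PropRiskNeutralReformulation}; hence there is $y_i \in \mathcal{S}^m_+$ with $W \bullet y_i = z_i - T \bullet x$ and $q \bullet y_i = \varphi(z_i - T \bullet x)$. Setting $v_i := \max\{0,\, c \bullet x + q \bullet y_i - \eta\}$ yields a feasible point of \eqref{ExpEscessSSDPReformulation} whose objective equals $Q_{\mathbb{E}+\rho\,\mathbb{EE}_\eta}(x)$. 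Thus the optimal value of \eqref{ExpEscessSSDPReformulation} is at most $Q_{\mathbb{E}+\rho\,\mathbb{EE}_\eta}(x)$ for every $x \in X$, and combined with the previous step the two optimal values coincide. The same construction shows that $x$ solves \eqref{ExpExcessSSDP} if and only if it extends to a minimizer of \eqref{ExpEscessSSDPReformulation}.

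The main obstacle is the joint monotonicity step: one has to be certain that the same $y_i$ minimizing the recourse cost $q \bullet y_i$ simultaneously minimizes the linearized excess term, rather than some trade-off between the two being required. This is precisely where nonnegativity of $\rho$ and monotonicity of $\max\{0,\cdot\}$ are indispensable, since dropping either would break the reduction. A secondary point to state carefully is the attainment of $\varphi$, which does not follow from A1 and $M_D \neq \emptyset$ alone (cf. the Example) but is secured here by the strict dual feasibility A2 through strong duality with primal attainment.
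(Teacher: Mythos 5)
Your proof is correct and follows essentially the same route as the paper's: the paper eliminates the auxiliary variables $v_i$ by monotonicity of the objective in $v$ and then reduces to the domination-plus-attainment argument of Proposition \ref{PropRiskNeutralReformulation}, which is exactly your two-inequality structure. Your write-up merely makes explicit two points the paper leaves implicit, namely that minimizing $q \bullet y_i$ simultaneously minimizes the excess term (using $\rho \geq 0$ and monotonicity of $\max\{0,\cdot\}$), and that attainment of the recourse minimum defining $\varphi$ is secured by A2 together with A1 (not by A1 and $M_D \neq \emptyset$ alone, as the paper's Example shows).
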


\begin{proof}
As the objective function of \eqref{ExpEscessSSDPReformulation} is increasing with respect to $v$, any optimal solution $(x, v_1, \ldots, v_S, y_1, \ldots, y_S)$ satisfies $v_i = \max \lbrace c \bullet x + q \bullet y_i - \eta, 0 \rbrace$ for all $i \in \mathcal{I}_S$. The asserted equivalence of \eqref{ExpExcessSSDP} and \eqref{ExpEscessSSDPReformulation} then follows as in the proof of Proposition~\ref{PropRiskNeutralReformulation}. \qed
\end{proof}

\begin{proposition}
\begin{equation*}
\min \left\{Q_{\mathbb{E} + \rho \, \mathbb{CV}@R_{\alpha}}(x) \; | \; x \in X \right\}
\end{equation*}
can be equivalently restated as
\begin{align}
\label{CVaRReformulation}
\min_{\substack{x, v_1, \ldots, v_S, \\ y_1, \ldots, y_S, \eta}} \bigg\{ c \bullet x + \sum_{i = 1}^{S} \pi_i q \bullet y_i + \rho \, \eta + \frac{\rho}{1-\alpha} \,  \sum_{i = 1}^{S} \pi_i v_i  \; | \\
T \bullet x + W \bullet y_i = z_i \; \forall i \in \mathcal{I}_S&, \nonumber \\ 
v \geq 0, \; v_i \geq c \bullet x + q \bullet y_i - \eta \; \forall i \in \mathcal{I}_S&, \nonumber \\
\eta \in \mathbb{R}, \; x \in X, \; y_i \in \mathcal{S}^m_+ \; \forall i \in \mathcal{I}_S& \bigg\}. \nonumber
\end{align}
\end{proposition}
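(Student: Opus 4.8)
The plan is to reduce the claim to the mean-excess case already settled in Proposition~\ref{ExtensiveMeanRiskEE}, exploiting that the conditional value-at-risk is itself an infimum of mean-excess expressions via its definition \eqref{DefCVAR}.

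First I would recall that, by the definition of $Q_\mathcal{R}$ and the representation \eqref{DefCVAR}, for every $x \in X$
$$
Q_{\mathbb{E} + \rho\,\mathbb{CV}@R_\alpha}(x) = \mathbb{E}[F(x)] + \rho \min_{\eta \in \mathbb{R}} \Big\{ \eta + \tfrac{1}{1-\alpha}\mathbb{EE}_\eta[F(x)] \Big\}.
$$
Since $\mathbb{E}[F(x)] + \tfrac{\rho}{1-\alpha}\mathbb{EE}_\eta[F(x)] = Q_{\mathbb{E} + \frac{\rho}{1-\alpha}\mathbb{EE}_\eta}(x)$, taking the minimum over $x \in X$, pulling the nonnegative factor $\rho$ and the $x$-independent term $\rho\eta$ inside, and interchanging the two infima (which is always admissible) yields
$$
\min_{x \in X} Q_{\mathbb{E} + \rho\,\mathbb{CV}@R_\alpha}(x) = \min_{\eta \in \mathbb{R}} \Big[ \rho\eta + \min_{x \in X} Q_{\mathbb{E} + \frac{\rho}{1-\alpha}\mathbb{EE}_\eta}(x) \Big].
$$

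Next I would apply Proposition~\ref{ExtensiveMeanRiskEE} with risk-aversion parameter $\tfrac{\rho}{1-\alpha} \geq 0$ (nonnegative because $\rho \geq 0$ and $\alpha \in (0,1)$) and fixed threshold $\eta$. This rewrites the inner minimum $\min_{x \in X} Q_{\mathbb{E} + \frac{\rho}{1-\alpha}\mathbb{EE}_\eta}(x)$ as the block-structured problem \eqref{ExpEscessSSDPReformulation} carrying the coefficient $\tfrac{\rho}{1-\alpha}$ in front of $\sum_i \pi_i v_i$. Finally I would merge the outer minimization over $\eta$ with this inner problem: promoting $\eta$ to a decision variable and appending the linear term $\rho\eta$ to the objective turns the right-hand side into exactly \eqref{CVaRReformulation}, establishing the asserted equivalence of infimal values.

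The only genuine point to verify lies inside the appeal to Proposition~\ref{ExtensiveMeanRiskEE}: a single family of recourse matrices $y_i$ serves both the expectation term $\sum_i \pi_i q \bullet y_i$ and the excess constraints $v_i \geq c \bullet x + q \bullet y_i - \eta$, so one must argue that lowering $q \bullet y_i$ to $\varphi(z_i - T \bullet x)$ is simultaneously optimal for both. This is precisely the monotonicity observation already used in Propositions~\ref{PropRiskNeutralReformulation} and \ref{ExtensiveMeanRiskEE}: the objective is nondecreasing in each $q \bullet y_i$ and the map $t \mapsto \max\{t - \eta, 0\}$ is nondecreasing, while strong duality (guaranteed by A1 and A2) ensures that the recourse minimum defining $\varphi(z_i - T \bullet x)$ is attained. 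Alternatively one could bypass Proposition~\ref{ExtensiveMeanRiskEE} and check the two inequalities between the optimal values directly, but the reduction above is shorter and more transparent.
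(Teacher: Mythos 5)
Your proposal is correct and follows essentially the same route as the paper: the paper's proof likewise invokes the variational representation \eqref{DefCVAR} of $\mathbb{CV}@R_\alpha$ and then pushes the expected excess into the constraints ``by the same trick as in Proposition~\ref{ExtensiveMeanRiskEE}''. Your version merely makes this reduction explicit (interchanging the infima over $x$ and $\eta$, invoking Proposition~\ref{ExtensiveMeanRiskEE} with risk parameter $\tfrac{\rho}{1-\alpha}$, and noting the monotonicity/attainment point), which is a faithful elaboration rather than a different argument.
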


\begin{proof}
This follows directly from the variational representation of $\mathbb{CV}@R$ in \eqref{DefCVAR}. 
The expected-excess can be pushed into the restrictions by the same trick as in Proposition \ref{ExtensiveMeanRiskEE}. \qed
\end{proof}

As in in the risk-neutral case, problems \eqref{ExpEscessSSDPReformulation} and \eqref{CVaRReformulation} exhibit a block structure, i.e. there is no coupling constraint involving variables associated with different scenarios. This allows for a direct adaptation of the decomposition algorithms established for the expectation based model.

\begin{proposition}
Consider the problem
$$
\min \left\{Q_{\mathbb{E} + \rho \, \mathbb{V}@R_{\alpha}}(x) \; | \; x \in X \right\}
$$
with compact set $X$. This problem can be equivalently restated as the following SDP with binary variables
\begin{align}
\label{VaRReformulation}
\min_{\substack{x, v_1, \ldots, v_S, \\ y_1, \ldots, y_S, \\ \delta_1 , \ldots, \delta_S, \eta}} 
\bigg\{ (1+\rho) \, c \bullet x + \sum_{i = 1}^{S} \pi_i q \bullet y_i +
\rho \, \eta \; | \\
T \bullet x + W \bullet y_i = z_i \; \forall i \in \mathcal{I}_S&, \nonumber \\ 
\sum_{i = 1}^{S} \delta_i \, \pi_i \geq \alpha, \nonumber \\
\eta - q \bullet y_i \geq  (1- \delta_i)  \, M \; \forall i \in \mathcal{I}_S \nonumber \\
\eta \in \mathbb{R}, \; x \in X, \; \delta_i \in \{0,1\}, \;  y_i \in \mathcal{S}^m_+ \; \forall i \in \mathcal{I}_S& \bigg\} \nonumber
\end{align}
if $M \in \mathbb{R}$ is chosen sufficiently big.
\end{proposition}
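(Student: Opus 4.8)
The plan is to mirror the reformulation proofs of Proposition~\ref{PropRiskNeutralReformulation} and Proposition~\ref{ExtensiveMeanRiskEE}, the genuinely new ingredient being a big-$M$ encoding of the value-at-risk through binary indicator variables. First I would expand the objective for the finite discrete distribution. Since $f(x,Z(\cdot))$ attains the value $f(x,z_i)=c\bullet x+\varphi(z_i-T\bullet x)$ with probability $\pi_i$, we have
\[
Q_{\mathbb{E}+\rho\,\mathbb{V}@R_\alpha}(x)=\Big(c\bullet x+\sum_{i=1}^S\pi_i\,\varphi(z_i-T\bullet x)\Big)+\rho\,\mathbb{V}@R_\alpha\!\big[f(x,Z(\cdot))\big].
\]
Exploiting the translation equivariance of $\mathbb{V}@R_\alpha$ (property (iv)), the deterministic summand $c\bullet x$ can be pulled out of the risk term, so that $\mathbb{V}@R_\alpha[f(x,Z(\cdot))]=c\bullet x+\mathbb{V}@R_\alpha[\Psi_x]$, where $\Psi_x$ denotes the random recourse cost taking the value $\varphi(z_i-T\bullet x)$ with probability $\pi_i$. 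This produces the coefficient $(1+\rho)\,c\bullet x$ appearing in \eqref{VaRReformulation} and reduces the task to encoding $\sum_i\pi_i\varphi(z_i-T\bullet x)$ and $\mathbb{V}@R_\alpha[\Psi_x]$.

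Second, I would introduce the block recourse variables $y_i$ subject to $T\bullet x+W\bullet y_i=z_i$ and $y_i\in\mathcal{S}^m_+$ and argue, exactly as in the proof of Proposition~\ref{PropRiskNeutralReformulation}, that at any optimal solution each $y_i$ is an optimal recourse decision, so that $q\bullet y_i=\varphi(z_i-T\bullet x)$. The key observation is that the objective of \eqref{VaRReformulation} is nondecreasing in every $q\bullet y_i$: increasing $q\bullet y_i$ raises the expectation term $\sum_i\pi_i\,q\bullet y_i$ and, through the coupling $\eta-q\bullet y_i\geq(1-\delta_i)M$, can only force $\eta$ upward. Hence the incentives for the expectation and the value-at-risk part are aligned, and replacing $\varphi(z_i-T\bullet x)$ by the feasible quantity $q\bullet y_i$ is without loss of optimality; attainment of the recourse minimum is guaranteed by strong duality under A1 and A2.

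Third---and this is where the real work lies---I would show that $\sum_{i}\pi_i\delta_i\geq\alpha$, $\delta_i\in\{0,1\}$ together with $\eta-q\bullet y_i\geq(1-\delta_i)M$ encode $\mathbb{V}@R_\alpha[\Psi_x]$ once the recourse is optimal. For $\delta_i=1$ the coupling reads $q\bullet y_i\leq\eta$, whereas for $\delta_i=0$ it must be rendered vacuous; this forces $M$ to be chosen so that $\eta-q\bullet y_i\geq M$ holds throughout the feasible region (i.e.\ $M$ large in magnitude with the sign that makes the constraint redundant), and it is precisely here that the compactness of $X$ enters: by Lemma~\ref{LemmaRecourseFunctional} the map $x\mapsto\varphi(z_i-T\bullet x)$ is finite and continuous, so the optimal recourse costs $q\bullet y_i$ and the relevant range of $\eta$ are uniformly bounded over $X$, yielding an admissible $M$. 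Given such an $M$, minimizing $\eta$ subject to the coverage constraint selects, among all index sets of probability mass at least $\alpha$, one minimizing $\max_{\delta_i=1}q\bullet y_i$, and the minimal such value is exactly the $\alpha$-quantile $\mathbb{V}@R_\alpha[\Psi_x]$. Combining this with the first two steps gives equality of optimal values and the correspondence of optimal $x$. The main obstacle is the non-convexity introduced by the binaries: unlike in Propositions~\ref{PropRiskNeutralReformulation} and \ref{ExtensiveMeanRiskEE}, one cannot simply push a single epigraphic variable to its bound but must reason over the finite family of $\delta$-selections, verifying both that every feasible pair $(\eta,\delta)$ satisfies $\eta\geq\mathbb{V}@R_\alpha[\Psi_x]$ (using monotonicity of the quantile in the recourse costs) and that the quantile is attained by some admissible selection.
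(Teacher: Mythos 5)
Your proposal is correct and takes essentially the same approach as the paper's proof: push $\mathbb{V}@R_\alpha$ into the constraints via the epigraph variable $\eta$, encode the resulting quantile condition $\sum_{i\,:\,q\bullet y_i\le\eta}\pi_i\ge\alpha$ with binary indicators and a big-$M$ constraint, and use compactness of $X$ together with finiteness and continuity of $\varphi$ (Lemma~\ref{LemmaRecourseFunctional}) to obtain an admissible $M$, with the reduction to optimal recourse decisions argued as in Proposition~\ref{PropRiskNeutralReformulation}. You are in fact more careful than the paper on the sign convention: for $\delta_i=0$ the constraint $\eta-q\bullet y_i\ge M$ must be rendered redundant, so $M$ must be a lower bound on $\eta-q\bullet y_i$ over the relevant region (negative, of large magnitude), exactly as you state, whereas the paper's phrasing ``sufficiently big'' and its condition ``$\eta-q\bullet y_i<M$'' have the inequality the wrong way around.
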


\begin{proof}
As in the preceding propositions introduce a dummy variable $\eta$ to push $\mathbb{V}@R[\varphi(z - T \bullet x)]$
into the restrictions as $\eta \geq \mathbb{V}@R[\varphi(z - T \bullet x)]$ and minimize over $\eta$.
Note that $\eta \geq \mathbb{V}@R[ \varphi(z - T \bullet x) ]$ is equivalent to
\begin{align}\label{VaR1}
\mu( \varphi(z-T\bullet x) \leq \eta ) \geq \alpha.
\end{align}
As for given $x \in X$ feasible points to the second stage problem corresponding to realization $z_i$ are denoted as $y_i$,
\eqref{VaR1} can be rewritten as
\begin{align*}
\sum_{ i \in \mathcal{I}_S \; : \; q \bullet y_i \leq \eta } \pi_i \geq \alpha.
\end{align*}
This conditional summation can in turn be cast into inequalities with\\ binary variables $\delta_i$, $i \in \mathcal{I}_S$,
\begin{align*}
&\eta - q \bullet y_i \geq (1-\delta_i) \, M, \; i \in \mathcal{I}_S \\
&\sum_{i \in \mathcal{I}_S} \delta_i \, \pi_i \geq \alpha
\end{align*} 
if $M$ is chosen such that $\eta - q \bullet y_i < M$ for all feasible $y_i$ and all $\eta$ close to 
$\mathbb{V}@R[ \varphi(z_i - T \bullet x)]$. Since $- q \bullet y_i \leq - \varphi(z_i - T\bullet x)$ the existence
of $M$ follows from compactness of $X$, as $\max_{x \in X} \varphi(z_i - T \bullet x) < \infty$ for all $i \in \mathcal{I}_S$.  \qed
\end{proof}

Unlike the previous models, \eqref{VaRReformulation} does not decompose scenariowise due to the coupling constraint $\sum_{i = 1}^{S} \delta_i \, \pi_i \geq \alpha$, which involves variables from all scenarios. Furthermore, it has an additional binary variable for each scenario. Problems of a similar structure have been considered in the context of minimizing a weighted sum of the expectation and the probability of exceeding a fixed threshold in \cite{SchultzWollenberg2017}, where Lagrangian relaxation of the coupling constraint enables an approach based on Bender's decomposition. This direction seems also very promising for the algorithmic treatment of \eqref{VaRReformulation}.

\begin{proposition}
\label{PropSemidev}
\begin{equation*}
\min \left\{Q_{\mathbb{E} + \rho \,\text{$\mathbb{M}$ad}^+_p}(x) \; | \; x \in X \right\},
\end{equation*}
can be equivalently restated as
\begin{align*}
\min_{\substack{x, v_1, \ldots, v_S, \\ y_1, \ldots, y_S}} \bigg\{ c \bullet x + \sum_{i = 1}^{S} \pi_i q \bullet y_i + \rho \, \big( \sum_{i = 1}^{S} \pi_i v_i^p \big)^{\frac{1}{p}}  \; | \; T \bullet x + W \bullet y_i = z_i \; \forall i \in \mathcal{I}_S&, \nonumber \\ 
v \geq 0, \; v_i \geq c \bullet x + q \bullet y_i  - \sum_{ j = 1}^{S} \pi_j \, q \bullet y_j  \; \forall i \in \mathcal{I}_S&, \nonumber \\
x \in X, \; y_i \in \mathcal{S}^m_+ \; \forall i \in \mathcal{I}_S& \bigg\}. \nonumber
\end{align*}
\end{proposition}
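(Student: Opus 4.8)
The plan is to follow the pattern of Propositions~\ref{PropRiskNeutralReformulation} and~\ref{ExtensiveMeanRiskEE} and to isolate the single place where the upper semi-deviation forces a genuinely different argument. First I would write the objective out explicitly for the finite discrete distribution. With $f(x,z_i) = c \bullet x + \varphi(z_i - T \bullet x)$ one has $\mathbb{E}[f(x,Z(\cdot))] = c \bullet x + \sum_{j=1}^{S} \pi_j \varphi(z_j - T \bullet x)$, so the additive constant $c \bullet x$ cancels in the centered quantity and $f(x,z_i) - \mathbb{E}[f(x,Z(\cdot))] = \varphi(z_i - T \bullet x) - \sum_{j=1}^{S} \pi_j \varphi(z_j - T \bullet x)$. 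Consequently $Q_{\mathbb{E} + \rho\,\mathbb{M}\mathrm{ad}^+_p}(x) = c \bullet x + \sum_{i=1}^{S} \pi_i \varphi(z_i - T \bullet x) + \rho\big(\sum_{i=1}^{S} \pi_i \max\{0,\, \varphi(z_i - T \bullet x) - \sum_{j=1}^{S} \pi_j \varphi(z_j - T \bullet x)\}^p\big)^{1/p}$.

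Next I would introduce, as in the earlier reformulations, recourse variables $y_i \in \mathcal{S}^m_+$ with $T \bullet x + W \bullet y_i = z_i$ and epigraphic variables $v_i \geq 0$ bounding the centered excess $q \bullet y_i - \sum_{j=1}^{S} \pi_j q \bullet y_j$ from below (here the centering cancels the additive $c \bullet x$, so only the values $q \bullet y_i$ enter the deviation). Because the objective is increasing in each $v_i$ and $t \mapsto t^p$ is increasing on $[0,\infty)$, any optimal $v_i$ equals $\max\{0,\, q \bullet y_i - \sum_{j} \pi_j q \bullet y_j\}$; substituting this shows that, for fixed $x$ and fixed $y$, the value of the extensive objective equals $\mathcal{R}[Y_y]$, where $\mathcal{R} := \mathbb{E} + \rho\,\mathbb{M}\mathrm{ad}^+_p$ and $Y_y$ denotes the random variable taking the value $c \bullet x + q \bullet y_i$ with probability $\pi_i$.

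The decisive step is to show that at optimality one may take $q \bullet y_i = \varphi(z_i - T \bullet x)$ in every scenario. In the risk-neutral and expected-excess cases this was immediate, since the objective was separately nondecreasing in each $q \bullet y_i$; here that termwise monotonicity fails, because lowering $q \bullet y_i$ in a low scenario also lowers the empirical mean $\sum_j \pi_j q \bullet y_j$ and may thereby enlarge the semi-deviation of the remaining scenarios. I therefore expect this to be the main obstacle, and the point where the hypothesis $\rho \in [0,1]$ is actually needed: for such $\rho$ the functional $\mathcal{R}$ is coherent and in particular nondecreasing for the almost sure order (item (v) following Definition~\ref{LawInvariance}). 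Replacing any feasible $y_i$ by an optimal recourse decision $\hat y_i$ with $q \bullet \hat y_i = \varphi(z_i - T \bullet x)$ — which exists under A1 and A2 by strong duality and primal attainment, cf. the proof of Proposition~\ref{PropRiskNeutralReformulation} — lowers $Y_y$ pointwise, whence $\mathcal{R}[Y_{\hat y}] \leq \mathcal{R}[Y_y]$, so no optimal value is lost by restricting to optimal recourse.

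It then remains to combine the two resulting inequalities. For the minimizing $x$, inserting $\hat y_i$ together with $v_i = \max\{0,\, q \bullet \hat y_i - \sum_j \pi_j q \bullet \hat y_j\}$ gives a feasible point of the extensive formulation with objective exactly $Q_{\mathcal{R}}(x)$, so its infimum is at most $\min_x Q_{\mathcal{R}}(x)$. Conversely, the $v_i$-bound yields that any feasible $(x,v,y)$ has objective at least $\mathcal{R}[Y_y]$, and the monotonicity step gives $\mathcal{R}[Y_y] \geq \mathcal{R}[Y_{\hat y}] = Q_{\mathcal{R}}(x) \geq \min_x Q_{\mathcal{R}}(x)$. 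The two bounds coincide, establishing the asserted equivalence and, exactly as in Proposition~\ref{PropRiskNeutralReformulation}, the correspondence between optimal solutions.
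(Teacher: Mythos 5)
Your proof is correct (with the two amendments you make), and it does considerably more than the paper, whose entire proof of this proposition reads ``Analogous to Proposition~\ref{ExtensiveMeanRiskEE}.'' The step you isolate is exactly what that analogy glosses over: in the expected-excess case the objective, after eliminating $v$, is separately nondecreasing in each $q \bullet y_i$, so one may pass to optimal recourse decisions scenario by scenario; here the centering term $\sum_j \pi_j\, q \bullet y_j$ destroys this termwise monotonicity, and the passage to $q \bullet \hat y_i = \varphi(z_i - T \bullet x)$ needs monotonicity of $\mathbb{E} + \rho\,\mathbb{M}\mathrm{ad}^+_p$ with respect to the $\mathbb{P}$-almost sure order, i.e.\ its coherence, which holds for $\rho \in [0,1]$ (item (v) after Definition~\ref{LawInvariance}). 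Your argument via $Y_{\hat y} \leq Y_y$ pointwise plus monotonicity of $\mathcal{R}$ is the right repair. Moreover, the restriction $\rho \in [0,1]$ you flag is genuinely needed, whereas the section only assumes $\rho \geq 0$: take $p = 1$, two scenarios with $\rho\,\pi_2 > 1$ and $\varphi(z_1 - T\bullet x) < \varphi(z_2 - T \bullet x)$, and a scenario-1 recourse problem admitting feasible points with $q \bullet y_1$ slightly above the optimal value; raising $q \bullet y_1$ by a small $t > 0$ then changes the extensive objective by $\pi_1 t \,(1 - \rho\, \pi_2) < 0$, so the extensive formulation's infimum drops strictly below $\min_x Q_{\mathcal{R}}(x)$ and the claimed equivalence fails. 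You have thus located a gap in the statement's hypotheses, not merely in its proof.

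One divergence you should make explicit rather than silent: your epigraph constraint is $v_i \geq q \bullet y_i - \sum_j \pi_j\, q \bullet y_j$, while the proposition as printed has $v_i \geq c \bullet x + q \bullet y_i - \sum_j \pi_j\, q \bullet y_j$. Your version is the correct one: the deviation of $f(x,z_i) = c \bullet x + q \bullet y_i$ from its mean $c \bullet x + \sum_j \pi_j\, q\bullet y_j$ has the $c \bullet x$ cancelled, whereas the printed constraint inflates every deviation by $c \bullet x = \mathrm{tr}(cx) \geq 0$, so the printed problem models $\mathbb{E}[f] + \rho \big( \mathbb{E}[\max\{0, f - \mathbb{E}[f] + c \bullet x\}^p] \big)^{1/p}$ rather than $Q_{\mathbb{E} + \rho\,\mathbb{M}\mathrm{ad}^+_p}$. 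This is evidently a slip in the paper (the constraint of \eqref{ExpEscessSSDPReformulation} copied with $\eta$ replaced by $\sum_j \pi_j\, q \bullet y_j$), but a complete write-up should state that the constraint is being corrected, not just prove the corrected version.
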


\begin{proof}
Analogous to Proposition \ref{ExtensiveMeanRiskEE}. \qed
\end{proof}

Unlike \eqref{VaRReformulation}, the equivalent SDP in Proposition \ref{PropSemidev} contains an individual coupling constraint for each scenario. While Lagrangian relaxation still is possible, it remains to be examined whether this approach is sensible form a computational point of view.


\begin{thebibliography}{}

\bibitem{AriyawansaZhu2006}
K. A. Ariyawansa, Y. Zhu, \textit{Stochastic semidefinite programming: a new paradigm for stochastic optimization}, 4OR, 4(3), pp. 239-253 (2006)

\bibitem{AriyawansaZhu2011}
K. A. Ariyawansa, Y. Zhu, \textit{A class of polynomial volumetric barrier decomposition algorithms for stochastic semidefinite programming}, Mathematics of Computation, 80, no. 275, pp.1639-1661 (2011)

\bibitem{Bertsekas1973}
D. P. Bertsekas, \textit{Stochastic optimization problems with nondifferentiable cost functionals}, Journal of Optimization Theory and Applications, 12, pp. 218-231 (1973)

\bibitem{Claus2016}
M. Claus, \textit{Advancing stability analysis of mean-risk stochastic programs : bilevel and two-stage models}, PhD thesis, University of Duisburg-Essen (2016)

\bibitem{ClausKraetschmerSchultz2017}
M. Claus, V. Kr\"atschmer and R. Schultz, \textit{Weak continuity of risk functionals with applications to stochastic programming}, SIAM Journal on Optimization, 27(1), pp. 91-108 (2017)

\bibitem{DhaeneEtAl2002}
J. Dhaene, M. Denuit, M. J. Goovaerts, R. Kaas, D. Vyncke, \textit{The concept of comonotonicity in actuarial science and finance: theory}, Insurance: Math. Econom., 31, pp. 3-33 (2002)

\bibitem{DhaeneEtAl2006}
J. Dhaene, S. Vanduffel, M. J. Goovaerts, R. Kaas, Q. Tang, D. Vyncke, \textit{Risk Measures and Comonotonicity: A Review}, Stochastic Models, 22, pp. 573-606 (2006)

\bibitem{FoellmerSchied2004}
H. F\"ollmer, A. Schied, \textit{Stochastic Finance: An Introduction in Discrete Time},
2nd ed., de Gruyter Stud. Math. 27, de Gruyter, Berlin (2004)

\bibitem{GaujalMertikopoulos2016}
B. Gaujal, P. Mertikopoulos, \textit{A stochastic approximation algorithms for stochastic semidefinite programming}, Probability in the Engineering and Informational Sciences, 30, pp. 431–454 (2016)

\bibitem{JinAriyawansaZhu2012}
S. Jin, K. A. Ariyawansa, Y. Zhu, \textit{Homogeneous Self-dual Algorithms for Stochastic Semidefinite Programming}, Journal of Optimization Theory and Applications, 155(3), pp.1073-1083 (2012)


\bibitem{KraetschmerSchiedZaehle2017}
V. Kr\"atschmer, A. Schied, H. Z\"ahle, \textit{Domains of weak continuity of statistical functionals with a view on robust statistics}, Journal of Multivariate Analysis, 158, pp. 1-19 (2017)

\bibitem{Kusuoka2001}
S. Kusuoka, \textit{On law-invariant coherent risk measures}, Advances in Mathematical Economics (eds. S. Kusuoka, T. Maruyama), Vol. 3, Springer, Tokyo, pp. 83-95 (2001)

\bibitem{MehrotraOezevin2007}
S. Mehrotra, M. G. \"Ozevin, \textit{Decomposition-based interior point methods for two-stage stochastic semidefinite programming}, SIAM J. Optim., 18(1), pp. 206-222 (2007)

\bibitem{Pflug2000}
G. C. Pflug, \textit{Some Remarks on the Value-at-Risk and the Conditional Value-at-Risk}, in Probabilistic Constrained Opimization - Methodology and Applications (ed. S. P. Uryasev), Kluwer Academic Publishers, Dordrecht, pp. 272-281 (2000).

\bibitem{Pichler2017}
A. Pichler, \textit{A quantitative comparison of risk measures}, Ann. Oper. Res., 254, pp. 251-275 (2017)

\bibitem{Rockafellar1970}
R. T. Rockafellar, \textit{Convex Analysis}, Princeton University Press (1970)

\bibitem{RuszczynskiShapiro2003}
A. Ruszczynski, A. Shapiro, \textit{Stochastic Programming}, 1st ed., Elsevier Science,
Amsterdam (2003)

\bibitem{SchultzTiedemann2006}
R. Schultz, S. Tiedemann, \textit{Conditional Value-at-Risk in Stochastic Programs
with Mixed-Integer Recourse}, Mathematical Programming, Series B, 105 (2-3), pp. 365-386 
(2006)

\bibitem{SchultzWollenberg2017}
R. Schultz, T. Wollenberg, \textit{Unit commitment under uncertainty in AC transmission systems via risk averse semidefinite stochastic Programs}, RAIRO Oper. Res., 51(2), pp. 391-416 (2017)

\bibitem{Shapiro2013}
A. Shapiro, \textit{On Kusuoka Representation of Law Invariant Risk Measures}, Math. of Oper. Res., 38(1), pp. 142-152 (2013)

\bibitem{ShapiroDentchevaRuszczynski2009}
A. Shapiro, D. Dentcheva, A. Ruszczynski, \textit{Lectures on Stochastic Programming}, 
MPS-SIAM, Philadelphia (2009)  

\bibitem{SunFreund2004}
P. Sun, R. M. Freund, \textit{Computation of minimum-cost covering ellipsoids}, Oper. Res., 52(5), pp. 690-706 (2004).

\bibitem{Vaidya1996}
P. M. Vaidya, \textit{A new algorithm for minimizing convex functions over a convex set}, Math. Program., Ser. A, 73, pp. 291-341 (1996).

\bibitem{VandenbergheBoyd1996}
L. Vandenberghe, S. Boyd, \textit{Semidefinite programming}, SIAM Rev., 38, pp. 49-95 (1996).

\bibitem{Zhao2001}
G. Zhao, \textit{A log-barrier method with Benders decomposition for solving two-stage stochastic linear programs}, Math. Program., Ser. A, 90, pp. 507-536 (2001).

\bibitem{Zhu2006}
Y. Zhu, \textit{Semidefinite Programming under Uncertainy}, PhD thesis, Washington State University (2006).

\bibitem{ZhuAriyawansa2011}
Y. Zhu, K. A. Ariyawansa, \textit{A preliminary set of applications leading to stochastic semidefinite programs and chance-constrained semidefinite programs}, Applied Mathematical Modelling, 35, pp. 2425-2442 (2011).

\end{thebibliography}
\end{document}